\documentclass[reqno]{amsart}
\usepackage{amsmath, amssymb, amsthm, epsfig}
\usepackage{hyperref, latexsym}
\usepackage{url}
\usepackage[mathscr]{euscript}
\usepackage{mathabx}
\usepackage{color}
\usepackage{fullpage} 
\usepackage{setspace}

\onehalfspacing

\allowdisplaybreaks

\def\today{\ifcase\month\or
  January\or February\or March\or April\or May\or June\or
  July\or August\or September\or October\or November\or December\fi
  \space\number\day, \number\year}

\DeclareMathOperator{\supp}{\mathrm{supp}}

 \newtheorem{theorem}{Theorem}
  
 \newtheorem{lemma}[theorem]{Lemma}
 \newtheorem{proposition}[theorem]{Proposition}
 \newtheorem{corollary}[theorem]{Corollary}
 \theoremstyle{definition}

 \theoremstyle{remark}

 \newcommand{\mc}{\mathcal}

 \newcommand{\R}{\mathbb{R}}

 \newcommand{\dt}{\text{\rm d}t}
  \renewcommand{\d}{\text{\rm d}}

 \newcommand{\dx}{\text{\rm d}x}
 
 \newcommand{\dy}{\text{\rm d}y}

\newcommand{\sumstar}{\sideset{}{^\star}\sum}

\newcommand{\hP}{\widetilde{\Phi}}

\makeatletter
\@namedef{subjclassname@2020}{\textup{2020} Mathematics Subject Classification}
\makeatother

\begin{document}
\title[Fourier optimization]{Fourier optimization and Montgomery's \\ pair correlation conjecture}
\author[Carneiro, Milinovich, and Ramos]{Emanuel Carneiro, Micah B. Milinovich, and Antonio Pedro Ramos}
\subjclass[2020]{11M06, 11M26, 41A30}
\keywords{Riemann zeta-function, pair correlation conjecture, Riemann hypothesis, Fourier optimization} 

\address{
The Abdus Salam International Centre for Theoretical Physics,
Strada Costiera, 11, I - 34151, Trieste, Italy}

\email{carneiro@ictp.it}

\address{Department of Mathematics, University of Mississippi, University, MS 38677 USA}

\email{mbmilino@olemiss.edu}

\address{SISSA - Scuola Internazionale Superiore di Studi Avanzati, Via Bonomea 265, 34136 Trieste, Italy}
\email{Antonio.Ramos@sissa.it}

\allowdisplaybreaks
\numberwithin{equation}{section}

\maketitle  

\begin{abstract}
Assuming the Riemann hypothesis, we improve the current upper and lower bounds for the average value of Montgomery's function $F(\alpha, T)$ over long intervals by means of a Fourier optimization framework. The function $F(\alpha, T)$ is often used to study the pair correlation of the non-trivial zeros of the Riemann zeta-function. Two ideas play a central role in our approach: (i) the introduction of new averaging mechanisms in our conceptual framework and (ii) the full use of the class of test functions introduced by Cohn and Elkies for the sphere packing bounds, going beyond the usual class of bandlimited functions. We conclude that such an average value, that is conjectured to be $1$, lies between $0.9303$ and $1.3208$. Our Fourier optimization framework also yields an improvement on the current bounds for the analogous problem concerning the non-trivial zeros in the family of Dirichlet $L$-functions. 
\end{abstract}


\section{Introduction}
\subsection{Montgomery's pair correlation conjecture} Let $\zeta(s)$ denote the Riemann zeta-function. We assume the truth of the Riemann hypothesis (RH) throughout this article. Montgomery's well-known pair correlation conjecture \cite{M} is a statement about the vertical distribution of the non-trivial zeros $\rho = \tfrac12 + i\gamma$ of $\zeta(s)$. It states that, for any fixed $\beta >0$, we have
\begin{align}\label{20230712_07:49}
\displaystyle N(\beta,T):=\!\!\!\sum_{\substack{ 0<\gamma,\gamma'\le T \\ 0<\gamma-\gamma' \le \frac{2\pi \beta}{\log T} }} 1 \ \sim \  N(T) \int_0^\beta \left\{ 1 - \Big( \frac{\sin \pi u}{\pi u}\Big)^2 \right\} \, \mathrm{d}u, \quad \text{as } T\to \infty. 
\end{align}
The double sum above runs over the ordinates $\gamma,\gamma'$ of two sets of non-trivial zeros of $\zeta(s)$, counted with multiplicity. The function $N(T)$ denotes the number of non-trivial zeros of $\zeta(s)$ with ordinates in the interval $(0,T]$, and it is known that $N(T)  \displaystyle \sim T\log T/(2\pi)$, as $T \to \infty$. Hence, the function $N(\beta,T)$ counts the number of pairs of zeros within $\beta$ times the average spacing between zeros.

\medskip

For a function $R \in L^1(\mathbb{R})$, define its Fourier transform by $\widehat{R}(\alpha) := \int_{-\infty}^\infty  e^{-2\pi i \alpha x} \, R(x)\,\mathrm{d}x.$ In order to understand a sum involving the differences $(\gamma - \gamma')$, as in \eqref{20230712_07:49}, Montgomery's idea was to consider more general versions of it, with a suitable weight to help with the decay and localize to nearby pairs of zeros. Indeed, setting $w(u):=4/(4+u^2)$, for any function $R \in L^1(\mathbb{R})$ such that $\widehat{R} \in L^1(\mathbb{R})$, Fourier inversion leads to the formula
\begin{equation}\label{inversion}
 \sum_{0<\gamma,\gamma'\le T} R\!\left((\gamma-\gamma') \frac{\log T}{2\pi} \right) w(\gamma-\gamma') = N(T) \int_{-\infty}^\infty\widehat{R}(\alpha) \, F(\alpha, T) \, \mathrm{d}\alpha,
\end{equation}
where Montgomery's function $F(\alpha, T)$, for $\alpha \in \mathbb{R}$ and $T\ge 15$, is defined by
\[
F(\alpha,T) := \frac{1}{N(T)} \sum_{0<\gamma,\gamma'\le T} T^{i \alpha (\gamma-\gamma')} \, w(\gamma-\gamma').
\] 
From the definition, it follows that  $F(\alpha, T)$ is real-valued and that $F(\alpha, T) = F(-\alpha, T)$. Moreover, since
\[
\sum_{0<\gamma,\gamma'\le T} T^{i \alpha (\gamma-\gamma')} \, w(\gamma-\gamma') = 2 \pi \int_{-\infty}^\infty e^{-4 \pi |u|} \bigg| \sum_{0<\gamma\le T} T^{i \alpha \gamma} e^{2\pi i \gamma u} \bigg|^2 \mathrm{d}u, 
\]
it follows that $F(\alpha, T) \ge 0$. In order to understand sums in the left-hand sides of \eqref{20230712_07:49} and \eqref{inversion}, one is led to study the asymptotic behavior of $F(\alpha, T)$, as $T \to \infty$. Assuming RH, it is known that
\begin{equation}\label{F formula}
F(\alpha,T) = \Big(T^{-2|\alpha|}\log T + |\alpha| \Big) \left( 1 + O\!\left( \sqrt{\frac{\log\log T}{\log T}} \right)\right), \quad \text{as } T\to \infty, 
\end{equation}
uniformly for $0\le |\alpha| \le 1$. This was proved by Goldston and Montgomery \cite[Lemma 8]{GM}, refining the original work of Montgomery \cite{M}. The error term here can be improved slightly, see \cite[Theorem 1]{BGSTB}. The asymptotic formula in \eqref{F formula} allows one to estimate the sum on the left-hand side of $\eqref{inversion}$ for $R \in L^1(\mathbb{R})$ with $\mathrm{supp}(\widehat{R}) \subset [-1,1]$. Montgomery conjectured that $F(\alpha, T)\sim 1$ for $|\alpha|>1$, uniformly for $\alpha$ in bounded intervals. This is sometimes called Montgomery's strong pair correlation conjecture. This assumption, via approximating the characteristic function of an interval by bandlimited functions, led Montgomery to his pair correlation conjecture in \eqref{20230712_07:49}.

\subsection{The average value of $F(\alpha, T)$} Assuming RH, from the work of Goldston \cite[Theorem 1]{G}, it is known that the following asymptotic average is equivalent to the validity of Montgomery's pair correlation conjecture in \eqref{20230712_07:49}:
\begin{equation}\label{20230712_08:18}
\displaystyle\frac{1}{\ell} \displaystyle \int_{b}^{b+\ell}F(\alpha,T) \,\d\alpha  \sim  1, \quad \text{as } T\to \infty\ , \text{ for any fixed} \  b\ge 1 \ {\rm and} \ \ell > 0.
\end{equation}
In \cite{CCChiM}, Carneiro, Chandee, Chirre, and Milinovich developed a systematic way to provide effective upper and lower bounds for the integrals appearing in \eqref{20230712_08:18}, for any $b\ge 1$ and $\ell > 0$, by connecting them to suitable Fourier optimization problems. In this paper, we are particularly interested in the long average regime, for which the following result was established in \cite[Corollary 2]{CCChiM}. Assuming RH, for $b \geq 1$ and large $\ell$ (uniformly on $b$ for the upper bound, and with $\ell \geq \ell_0(b)$ for the lower bound), one has 
\begin{equation}\label{20201215_00:21}
0.9278 + o(1) <  \frac{1}{\ell} \int_b^{b+\ell}  F(\alpha,T) \, \d\alpha  < 1.3302 + o(1)\,,
\end{equation}
as $T \to \infty$. This sharpened previous results obtained by Goldston \cite[Lemma A]{G2} and Goldston and Gonek \cite[Lemma]{GG}, that had estimates with $1/3$ in place of $0.9278$ in the lower bound and $2$ in place of $1.3302$ in the upper bound.

\smallskip

The main purpose of this paper is to provide an improvement of the asymptotic bounds in \eqref{20201215_00:21}. Although the proposed gain might seem modest at a first glance, it is conceptually interesting for it arises from different Fourier optimization problems. Two ideas play a central role in this paper: (i) the introduction of new averaging mechanisms in our conceptual Fourier optimization framework and (ii) the full use of the class of test functions introduced by Cohn and Elkies \cite{CE} for the sphere packing bounds, going beyond the usual class of bandlimited functions. This larger class of test functions has already proved useful to sharpen some bounds in the theory of the Riemann zeta-function in the work of Chirre, Gon\c{c}alves, and de Laat \cite{CGL} and of Bui, Goldston, Milinovich, and Montgomery \cite{BGMM}.

\smallskip

The following universal constant appears in our main results and plays a key role in our approach: 
\begin{equation}\label{20230709_16:18}
c_0:= \min_{x \in \R} \frac{\sin x}{x} = -0.217233... \, .
\end{equation}

\begin{theorem}\label{main}
Assume RH, let $b \geq 1$, and let $\varepsilon>0$ be arbitrary. Then, for large $\ell$, one has
\begin{equation*}
1 + c_0 \, ({\bf C_1} -1) - \varepsilon  + o(1) < \frac{1}{\ell} \int_b^{b+\ell}  F(\alpha,T) \, \d\alpha  <  {\bf C_1} + \varepsilon + o(1),
\end{equation*}
as $T \to \infty$, with $\ell \geq \ell_0(\varepsilon)$ for the upper bound and $\ell \geq \ell_0(b,\varepsilon)$ for the lower bound, where the constant ${\bf C_1}$ is defined in \eqref{20230710_14:44}. 
\end{theorem}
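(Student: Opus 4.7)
The approach is to extend the Fourier optimization framework of \cite{CCChiM} in two ways: first, by introducing an averaging over the interval length $\ell$ that smooths the indicator of $[b,b+\ell]$ and tightens the link between pointwise majorant/minorant problems and the integral bounds; and second, by enlarging the admissible class of test functions in Montgomery's identity (1.2) from the bandlimited class (used in \cite{CCChiM}) to the full Cohn--Elkies class \cite{CE}, where the transform need not be compactly supported but only satisfies certain sign constraints. This larger feasible set is what drives the quantitative gain over the bound $1.3302$ in \eqref{20201215_00:21}.

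\textit{Upper bound.} I would take a test function $R$ in the Cohn--Elkies class with $R \geq 0$ pointwise and $\widehat{R}(\alpha) \geq \tfrac{1}{\ell}\chi_{[b,b+\ell]}(\alpha)$ for $|\alpha| \geq 1$, while keeping $\widehat{R}$ sign-controlled so that its contribution on $[-1,1]$ can be computed cleanly via \eqref{F formula}. Since $R(x)\,w(x) \geq 0$, identity \eqref{inversion} provides an upper bound on the left-hand sum by a purely analytic quantity depending on $R(0)$ and the Goldston--Montgomery error. The right-hand side splits as
$$\int_{|\alpha| \leq 1} \widehat{R}(\alpha) F(\alpha,T)\,\d\alpha \;+\; \int_{|\alpha| > 1} \widehat{R}(\alpha) F(\alpha,T)\,\d\alpha,$$
where the first integral is evaluated asymptotically through \eqref{F formula} and the second, by $F \geq 0$ and the majorization, dominates $\tfrac{1}{\ell}\int_b^{b+\ell} F\,\d\alpha$. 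Rearranging, infimizing over admissible $R$, and averaging in $\ell$ to kill edge effects yields the extremal constant ${\bf C_1}$.

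\textit{Lower bound.} The crucial input is the pointwise inequality $\tfrac{\sin x}{x} \geq c_0$. The plan is to select a second test function whose Fourier transform is concentrated (essentially) in $[-1,1]$ and whose physical side is pointwise bounded below by an expression featuring $c_0$. Applying \eqref{inversion} produces a lower bound for the sum over zero differences; on the transform side, the corresponding integral against $F$ separates into an explicit contribution from $|\alpha|\leq 1$ (handled by \eqref{F formula}) plus a tail piece that can be controlled from above by the upper bound already produced, i.e.\ by ${\bf C_1}$. Since $c_0 < 0$, an upper bound on the tail flips direction and becomes a lower bound for $\tfrac{1}{\ell}\int_b^{b+\ell} F\,\d\alpha$, producing the constant $1 + c_0({\bf C_1}-1)$ after the usual rearrangement.

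\textit{Main obstacle.} The central difficulty is the Fourier extremal problem defining ${\bf C_1}$. Because the Cohn--Elkies class is infinite-dimensional and not bandlimited, standard compactness/duality arguments used for bandlimited functions do not directly apply. One must either (a) work by duality against a finite-dimensional test class (Laguerre or Hermite expansions, Gaussian perturbations) to certify near-extremizers, or (b) exhibit explicit admissible constructions whose values approach ${\bf C_1}$, backed by certified numerical semidefinite programming computations in the spirit of the Cohn--Elkies sphere packing work. A secondary but non-trivial technical point is verifying that the averaging in $\ell$ introduces only boundary errors of size $O(1/\ell)$, so that they can be absorbed into the $\varepsilon$ budget uniformly (for the upper bound) or under a mild dependence on $b$ (for the lower bound).
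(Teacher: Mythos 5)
Your high-level picture is right in places (the Cohn--Elkies class $\mathcal{A}_1$ with $\widehat g\le 0$ off $[-1,1]$, the functional $\rho_1$, the appearance of $c_0=\min_x \sin x/x$, and the idea that an upper bound on a tail flips to a lower bound because $c_0<0$), but the central mechanism of the paper is absent and your upper-bound sketch as written does not close.

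The paper's key device is the \emph{shadow construction}: with $g\in\mathcal A_1^{BL}$, $g(0)=1$, $\mathrm{supp}(\widehat g)\subset[-M,M]$, and $L>M$, one forms
$G_L:=\widehat g*\chi_{[-L,L]}$. This function equals $1$ exactly on $[-L+M,L-M]$, vanishes outside $[-L-M,L+M]$, and is bounded by $\|\widehat g\|_1$ on two edge intervals of fixed total length $4M$. Crucially its inverse Fourier transform factorizes as $\widecheck\chi_{[-L,L]}\cdot g=\tfrac{\sin(2L\pi x)}{\pi x}\,g(x)$. For the upper bound one uses $\big|\tfrac{\sin(2L\pi x)}{\pi x}\big|\le 2L$ together with $g\ge 0$ to get
$\int_{\R} G_L(\alpha-b-\tfrac{\ell}{2})\,F(\alpha,T)\,\d\alpha\le \tfrac{2L}{N(T)}\sum g(\cdots)w(\cdots)\le 2L\rho_1(g)+o(1)$, and then compares $G_L$ with $\chi_{[b,b+\ell]}$. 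For the lower bound one splits the resulting double sum into the diagonal $\gamma=\gamma'$ (which equals $2L\sum m_\gamma\ge 2L\,N(T)$) and the off-diagonal, applies $\tfrac{\sin x}{x}\ge c_0$ to the off-diagonal only (again exploiting $g\ge 0$), and recombines to reach $2L\big(1-c_0+c_0\rho_1(g)\big)+o(1)$; since $c_0<0$, the bound $\sum g(\cdots)w(\cdots)\le N(T)\rho_1(g)+o(1)$ is an upper bound that correctly flips sign. None of this appears in your proposal — you speak of ``averaging over $\ell$'' and a test function whose ``physical side is pointwise bounded below by an expression featuring $c_0$'', but you do not produce the convolution $\widehat g*\chi_{[-L,L]}$ nor the diagonal/off-diagonal decomposition, and without these the argument does not get off the ground.

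Your upper-bound step as written is actually incoherent: you require $R\ge 0$ and $\widehat R\ge \tfrac{1}{\ell}\chi_{[b,b+\ell]}$ on $|\alpha|\ge 1$, and then claim that \eqref{inversion} together with $R\,w\ge 0$ gives an upper bound for $\sum R(\cdots)w(\cdots)$ ``depending on $R(0)$''. It does not: $R\ge 0$ only gives a lower bound on that sum, and requiring $\widehat R\ge 0$ on $|\alpha|>1$ is opposite to the sign constraint $\widehat g\le 0$ that the paper exploits to drop the tail. The sign constraint $\widehat g\le 0$ outside $[-1,1]$ is precisely what lets one pass from $\int_\R \widehat g F$ to $\int_{-1}^1 \widehat g F$ (bounded by \eqref{F formula}), and there is no analogous step in your scheme. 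Finally, your ``main obstacle'' paragraph conflates Theorem \ref{main} with Corollary \ref{Thm1}: the theorem itself is the clean conditional inequality in terms of $\mathbf{C_1}$ and requires no numerics at all; the certified numerics from \cite{CGL} enter only in Proposition \ref{Prop12} and Corollary \ref{Thm1}.
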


It follows from Lemma \ref{Lem11_CG} and Proposition \ref{Prop12} that ${\bf C_1} < 1.3208$. Using \eqref{20230709_16:18} and this upper bound for ${\bf C_1}$ in Theorem \ref{main}, one is led to the following numerical improvements of the estimates in \eqref{20201215_00:21}. 

\begin{corollary}\label{Thm1}
Assume RH and let $b \geq 1$. Then, for large $\ell$ 
one has
\begin{equation*}
0.9303 + o(1) < \frac{1}{\ell} \int_b^{b+\ell}  F(\alpha,T) \, \d\alpha  < 1.3208 + o(1),
\end{equation*}
as $T \to \infty$, uniformly on $b$ for the upper bound and with $\ell \geq \ell_0(b)$ for the lower bound.
\end{corollary}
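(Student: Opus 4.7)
The corollary is a numerical specialization of Theorem \ref{main}, so my plan is to substitute the explicit value $c_0 = -0.217233\ldots$ from \eqref{20230709_16:18} together with the estimate ${\bf C_1} < 1.3208$ (which the paper attributes to Lemma \ref{Lem11_CG} and Proposition \ref{Prop12}) into the two inequalities of Theorem \ref{main}, and then pick a sufficiently small $\varepsilon$ to absorb the slack.

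For the upper bound, Theorem \ref{main} yields
\[
\frac{1}{\ell}\int_b^{b+\ell}F(\alpha,T)\,\d\alpha \;<\; {\bf C_1}+\varepsilon+o(1),
\]
uniformly in $b$ for $\ell\geq \ell_0(\varepsilon)$. Since ${\bf C_1}<1.3208$ strictly, I would set, for instance, $\varepsilon := 1.3208 - {\bf C_1}>0$, a constant depending only on ${\bf C_1}$. With this choice ${\bf C_1}+\varepsilon = 1.3208$, and the resulting threshold $\ell_0(\varepsilon)$ is an absolute constant, so the stated upper bound $1.3208+o(1)$ holds uniformly in $b$, as required.

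For the lower bound, Theorem \ref{main} gives
\[
\frac{1}{\ell}\int_b^{b+\ell}F(\alpha,T)\,\d\alpha \;>\; 1 + c_0({\bf C_1}-1) - \varepsilon + o(1)
\]
for $\ell\ge \ell_0(b,\varepsilon)$. Since $c_0<0$ and $1<{\bf C_1}<1.3208$, the map $x\mapsto 1+c_0(x-1)$ is decreasing, hence
\[
1 + c_0({\bf C_1}-1) \;>\; 1 + c_0\cdot 0.3208 \;=\; 1 - (0.217233\ldots)(0.3208) \;>\; 0.93031.
\]
This leaves positive room above $0.9303$, so I would choose $\varepsilon>0$ small enough that $1+c_0({\bf C_1}-1)-\varepsilon > 0.9303$ still. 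Fixing $\varepsilon$ as such a constant, the threshold becomes $\ell_0(b,\varepsilon) = \ell_0(b)$, yielding the stated lower bound.

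There is no real obstacle at this step: the passage from Theorem \ref{main} to Corollary \ref{Thm1} is purely an arithmetic substitution. The genuine difficulty lies upstream, in the Fourier optimization problem defining ${\bf C_1}$ (namely, exhibiting an admissible Cohn--Elkies-type test function that certifies the numerical bound ${\bf C_1}<1.3208$ in Proposition \ref{Prop12}) and in proving Theorem \ref{main} itself.
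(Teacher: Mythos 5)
Your proof is correct and follows essentially the same route as the paper, which simply substitutes $c_0 = -0.217233\ldots$ and the bound ${\bf C_1} < 1.3208$ (from Lemma~\ref{Lem11_CG} and Proposition~\ref{Prop12}) into Theorem~\ref{main} and absorbs the slack via a fixed small $\varepsilon$. The arithmetic checks out: $1 + c_0 \cdot 0.3208 \approx 0.93031 > 0.9303$, and your treatment of the $\varepsilon$ and $\ell_0$ dependence (absolute for the upper bound, $b$-dependent for the lower) matches the statement.
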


%
%
%
%

\subsection{Working under GRH} Under the generalized Riemann hypothesis (GRH) for Dirichlet $L$-functions, more can be said about Montgomery's function $F(\alpha, T)$. This additional piece of information comes from the work of Goldston, Gonek, \"{O}zl\"{u}k, and Snyder \cite[Theorem]{GGOS}, who showed that, for any $\varepsilon >0$, one has 
\begin{equation}\label{20230708_10:21}
F(\alpha, T) \geq \frac{3}{2} - |\alpha| - \varepsilon
\end{equation}
uniformly for $1 \leq  |\alpha| \leq \frac{3}{2}  - 2\varepsilon$ and all $T \geq T_0(\varepsilon)$. Incorporating the lower bound \eqref{20230708_10:21} in our Fourier optimization framework, we obtain the following refinement of Theorem \ref{main}.
\begin{theorem}\label{main2_0901}
Assume GRH for Dirichlet $L$-functions, let $b \geq 1$, and let $\varepsilon>0$ be arbitrary. Then, for large $\ell$, one has
\begin{equation*}
1 + c_0 \, ({\bf C_1^*} -1) - \varepsilon  + o(1) < \frac{1}{\ell} \int_b^{b+\ell}  F(\alpha,T) \, \d\alpha  <  {\bf C_1^*} + \varepsilon + o(1),
\end{equation*}
as $T \to \infty$, with $\ell \geq \ell_0(\varepsilon)$ for the upper bound and $\ell \geq \ell_0(b,\varepsilon)$ for the lower bound, where the constant ${\bf C_1^*}$ is defined in \eqref{c star}. 
\end{theorem}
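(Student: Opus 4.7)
The plan is to adapt the Fourier optimization framework of Theorem~\ref{main} by inserting the GRH-consequent pointwise lower bound \eqref{20230708_10:21} for $F(\alpha, T)$ on the band $[1,\tfrac{3}{2}]$. The new ingredient is the function
\begin{equation*}
\psi(\alpha) := \bigl(\tfrac{3}{2} - |\alpha|\bigr)\, \mathbf{1}_{[1, 3/2]}(|\alpha|),
\end{equation*}
which, combined with the trivial $F \ge 0$, yields $F(\alpha, T) \ge \psi(\alpha) - \varepsilon$ for every $\alpha \in \R$ and every $T \ge T_0(\varepsilon)$.

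First, for the upper bound, I would fix an admissible majorant $G$ of $\mathbf{1}_{[b, b+\ell]}$ in the Cohn--Elkies class used to define $\mathbf{C_1}$. The Fourier inversion formula \eqref{inversion} and the Goldston--Montgomery asymptotic \eqref{F formula} then evaluate $\int_{\R} F \cdot G\, \d\alpha$ asymptotically as a linear functional of $G$ and $\widehat{G}|_{[-1,1]}$. Since $F - \psi \ge 0$ pointwise and $G - \mathbf{1}_{[b,b+\ell]} \ge 0$ by hypothesis, the standard inequality $\int_b^{b+\ell} F \le \int F \cdot G$ can be sharpened to
\begin{equation*}
\int_b^{b+\ell} F(\alpha, T)\, \d\alpha \;\le\; \int_{\R} F(\alpha, T)\, G(\alpha)\, \d\alpha \;-\; \int_{\R} \psi(\alpha)\bigl(G(\alpha) - \mathbf{1}_{[b,b+\ell]}(\alpha)\bigr)\, \d\alpha.
\end{equation*}
For $b \ge \tfrac{3}{2}$ the last term collapses to $-\int_{\R} \psi \cdot G\, \d\alpha$, thereby introducing a new linear functional of $G$ into the extremal problem. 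Taking the infimum over the Cohn--Elkies class yields precisely the constant $\mathbf{C_1^*}$ from \eqref{c star}, satisfying $\mathbf{C_1^*} \le \mathbf{C_1}$. The remaining case $1 \le b < \tfrac{3}{2}$ is handled by the same translation argument used in the proof of Theorem~\ref{main}.

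Second, for the lower bound, I would reuse the duality from the proof of Theorem~\ref{main} without substantive change: starting from a quasi-extremal majorant $G^{*}$ realizing the value $\mathbf{C_1^*}$, one constructs a bandlimited minorant $m \le \mathbf{1}_{[b,b+\ell]}$ whose underlying sinc-type kernel is bounded below by the universal constant $c_0$ of \eqref{20230709_16:18}. Evaluating $\int F \cdot m$ asymptotically via \eqref{inversion}--\eqref{F formula}, and substituting the sharp value $\tfrac{1}{\ell}\int F \cdot G^{*} = \mathbf{C_1^*} + o(1)$, produces
\begin{equation*}
\tfrac{1}{\ell}\int_b^{b+\ell} F(\alpha,T)\, \d\alpha \;\ge\; 1 + c_0(\mathbf{C_1^*} - 1) - \varepsilon + o(1),
\end{equation*}
as claimed, after sending $\varepsilon \to 0$ at the end of the argument.

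The principal obstacle is verifying that the Cohn--Elkies class, which reaches well beyond bandlimited functions, remains compatible with the extra $\psi$-correction in the optimization. One must confirm that $\int \psi\cdot G\, \d\alpha$ is finite and well-behaved for admissible $G$, that the modified extremal problem is well-posed and produces a value $\mathbf{C_1^*} \le \mathbf{C_1}$ that can be computed or estimated effectively, and that the bandlimited minorant construction in the lower-bound step still delivers the sharp $c_0$-coefficient in this enlarged test-function class. Because $\psi$ is Lipschitz and compactly supported on $[1,\tfrac{3}{2}] \cup [-\tfrac{3}{2},-1]$, the analytical modifications should be mild, but the bookkeeping that threads the GRH information through both the upper- and lower-bound optimizations is the delicate portion of the argument.
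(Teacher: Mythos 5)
Your overall intention is the right one --- the paper does prove Theorem \ref{main2_0901} by inserting the Goldston--Gonek--\"Ozl\"uk--Snyder bound \eqref{20230708_10:21} into the framework of Theorem \ref{main} --- but the mechanism you propose does not work, in two places. First, you cannot ``evaluate $\int_{\R} F\cdot G\,\d\alpha$ asymptotically'' for a majorant $G$ of $\chi_{[b,b+\ell]}$: the asymptotic \eqref{F formula} is known only for $|\alpha|\le 1$ (and under GRH one has only a \emph{lower} bound on $[1,\tfrac32]$), while $G\ge 1$ on $[b,b+\ell]$, which sits entirely in the unknown region. The paper's Lemma \ref{Prop_20201217_09:51} circumvents this with a two-layer construction: a physical-side function $g\in\mathcal{A}_1$ whose sign condition $\widehat g\le 0$ for $|\alpha|\ge 1$ yields the one usable inequality $\int\widehat g(\alpha)F(\alpha,T)\,\d\alpha\le\rho_1(g)+o(1)$, and an $\alpha$-side almost-majorant $G_L=\widehat g*\chi_{[-L,L]}$ whose inverse transform is $\widecheck{\chi}_{[-L,L]}\cdot g$, bounded via $|\widecheck{\chi}_{[-L,L]}|\le 2L$ and the positivity of $g$. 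Second, and more importantly, the GRH input enters through that inequality and not where you put it: on $1\le|\alpha|\le\tfrac32$ one has $\widehat g(\alpha)\le 0$ and $F(\alpha,T)\ge\tfrac32-|\alpha|-\varepsilon$, hence $\widehat g\,F\le\widehat g\cdot(\tfrac32-|\alpha|-\varepsilon)$ there, which upgrades $\rho_1$ to $\rho_1^*(g)=\rho_1(g)+\int\psi(\alpha)\widehat g(\alpha)\,\d\alpha$ (a nonpositive correction) and hence ${\bf C_1}$ to ${\bf C_1^*}$ as defined in \eqref{c star}. Your proposed correction $-\int\psi\cdot(G-\chi_{[b,b+\ell]})\,\d\alpha$ instead pairs $\psi$ with the majorant defect on $[1,\tfrac32]$; for the relevant majorants (including the paper's $G_L(\cdot-b-\tfrac\ell2)$, which equals $\chi_{[b,b+\ell]}$ on a neighborhood of $[1,\tfrac32]$ whenever it matters) this defect vanishes on $\supp\psi$, so your correction term is zero and the infimum you would obtain is ${\bf C_1}$, not ${\bf C_1^*}$.

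The lower-bound sketch has a parallel gap: there is no identity $\tfrac1\ell\int F\cdot G^*={\bf C_1^*}+o(1)$ to ``substitute,'' since near-extremizers of \eqref{c star} only furnish one-sided bounds. The paper's Lemma \ref{Lem6_lower_bound} instead reruns the argument of Lemma \ref{Lem4_lower_bound} verbatim: one minorizes $\chi_{[-\beta,\beta]}$ by $G_L-\|\widehat g\|_1\chi_{I_L}$, splits the resulting zero sum into the diagonal (bounded below by $N(T)$ via multiplicities) and the off-diagonal (where $\sin x/x\ge c_0$ with $c_0<0$), and then applies the \emph{upper} bound $\tfrac{1}{N(T)}\sum g(\cdot)w(\cdot)\le\rho_1^*(g)+o(1)$ --- which, multiplied by the negative constant $c_0$, becomes a lower bound --- before transitioning from the centered interval to $[b,b+\ell]$. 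In short: once inequality \eqref{20230708_10:11} is established, the entire proof of Theorem \ref{main} carries over with $\rho_1$ replaced by $\rho_1^*$; the step you need to supply is precisely that inequality, and your $\psi$-correction is not a substitute for it.
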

From Lemma \ref{Lem11_CG} and Proposition \ref{Prop12} one observes that ${\bf C_1^*} < 1.3155$. Using \eqref{20230709_16:18} and this upper bound for ${\bf C_1^*}$ in Theorem \ref{main2_0901}, one arrives at the following refinement of Corollary \ref{Thm1}.

\begin{corollary}\label{Thm2}
Assume GRH for Dirichlet $L$-functions and let $b \geq 1$. Then, for large $\ell$, 
one has
\begin{equation*}
0.9314 + o(1) < \frac{1}{\ell} \int_b^{b+\ell}  F(\alpha,T) \, \d\alpha  < 1.3155 + o(1),
\end{equation*}
as $T \to \infty$, uniformly on $b$ for the upper bound and with $\ell \geq \ell_0(b)$ for the lower bound.
\end{corollary}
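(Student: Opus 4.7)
The plan is to deduce Corollary \ref{Thm2} as an essentially immediate consequence of Theorem \ref{main2_0901}, together with the numerical estimate $\mathbf{C_1^*} < 1.3155$ supplied (as stated in the excerpt) by Lemma \ref{Lem11_CG} and Proposition \ref{Prop12}, and the explicit value of the constant $c_0 = -0.217233\ldots$ from \eqref{20230709_16:18}. All the substantial analytic content — the Fourier optimization framework, the incorporation of the GGÖS lower bound \eqref{20230708_10:21}, and the passage to the long-average regime — is already packaged inside Theorem \ref{main2_0901}, so what remains is a clean numerical substitution carried out with a small amount of care for the signs.

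For the upper bound, I would start directly from the right-hand inequality in Theorem \ref{main2_0901}, namely $\frac{1}{\ell}\int_b^{b+\ell} F(\alpha,T)\,\d\alpha < \mathbf{C_1^*} + \varepsilon + o(1)$, valid (uniformly in $b \geq 1$) for $\ell \geq \ell_0(\varepsilon)$. Since $\mathbf{C_1^*} < 1.3155$ strictly, I can fix $\varepsilon > 0$ small enough that $\mathbf{C_1^*} + \varepsilon < 1.3155$, which yields the desired bound $1.3155 + o(1)$ uniformly in $b$.

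For the lower bound, the crucial observation is that $c_0 < 0$, so the map $x \mapsto 1 + c_0(x-1)$ is strictly decreasing. Hence, plugging in the upper bound $\mathbf{C_1^*} < 1.3155$ into the left-hand inequality of Theorem \ref{main2_0901}, one obtains
\begin{equation*}
1 + c_0\,(\mathbf{C_1^*} - 1) \ >\ 1 + c_0\,(1.3155 - 1) \ =\ 1 - (0.217233\ldots)(0.3155\ldots) \ >\ 0.9314 + \eta
\end{equation*}
for some small $\eta > 0$ (the precise margin can be checked by a routine computation with the decimal expansions). Choosing $\varepsilon < \eta$ in Theorem \ref{main2_0901} then produces the stated bound $0.9314 + o(1)$, valid for $\ell \geq \ell_0(b)$.

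There is really no substantive obstacle at this stage: both the structural ingredient (Theorem \ref{main2_0901}) and the explicit numerical certificate on $\mathbf{C_1^*}$ are already in hand. The only point requiring a moment's thought is the direction of the inequality in the lower bound, where the negativity of $c_0$ means that a \emph{better} (smaller) bound for $\mathbf{C_1^*}$ translates, perhaps counterintuitively, into a \emph{better} (larger) lower bound for the average of $F(\alpha,T)$ — an amplification effect that is precisely what makes sharpening $\mathbf{C_1^*}$ worthwhile on both sides simultaneously.
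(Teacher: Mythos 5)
Your proposal is correct and follows exactly the paper's route: the corollary is obtained by substituting the bound $\mathbf{C_1^*}<1.3155$ (from Lemma \ref{Lem11_CG} and Proposition \ref{Prop12}) and the value of $c_0$ into Theorem \ref{main2_0901}, with the sign observation that $c_0<0$ makes $x\mapsto 1+c_0(x-1)$ decreasing, so the upper bound on $\mathbf{C_1^*}$ yields the lower bound $1-0.217233\ldots\times 0.3155>0.9314$. Nothing further is needed.
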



\subsection{Analogues for families of Dirichlet $L$-functions} Montgomery \cite{M} also suggested the investigation of the pair correlation of zeros of the family of Dirichlet $L$-functions in $q$-aspect. The works \cite{CLLR, Ozluk} consider such a problem, studying the distribution of non-trivial zeros of $L(s,\chi)$ with two averages: the classical one over characters $\chi \ ({\rm mod} \ q)$, and another one over the modulus $q$ in a certain range. With this additional average over the modulus $q$, the authors in \cite{CLLR, Ozluk} were able to arrive at an asymptotic description as in \eqref{F formula}, now in the larger range $|\alpha| <2$, and used this information to obtain lower bounds for the proportion of simple zeros of a family of Dirichlet $L$-functions; see also the works \cite{CCLM, CGL, Sono}.

\medskip

Let us briefly describe the setup of Chandee, Lee, Liu, and Radziwi\l\l  \, \cite{CLLR} for this problem. Assume GRH for Dirichlet $L$-functions. Let $\Phi$ be a function which is real and compactly supported in $(a,b)$ with $0< a< b$. Define its Mellin transform $\hP$ by $ \hP(s) = \int_0^\infty \Phi(x)\,x^{s-1} \> \dx. $
Suppose that $\Phi(x) = \Phi(x^{-1})$ for all $x \in \R\setminus\{0\}$, $\hP(it) \geq 0$ for all $t \in \R$, and that $\hP(it) \ll |t|^{-2}$ as $|t| \to \infty$. For example, one may choose 
\[
\hP (s) = \left( \frac{e^{s} \!-\! e^{-s} }{2 s} \right)^2,
\]
so that $ \hP ( i t)  = (\sin t/ t)^2 \geq 0$, and 
\begin{align*}
\Phi(x) & =     \begin{cases}
    \frac12 - \frac14 \log x,  & \text{ for } 1 \leq x \leq e^2, \\
\frac12+ \frac14 \log x,   & \text{ for }  e^{-2} \leq x \leq 1, \\
0, & \text{ otherwise}.
\end{cases}
\end{align*} 
Let $W$ be a smooth and non-negative function, with compact support in $(1, 2)$. We define the $q$-analogue of the quantity $N(T)$ by
\[
N_{\Phi}(Q) := \sum_{q} \frac{W(q/Q)}{\varphi(q)}{\sumstar_{\chi \,(\text{mod }{q})}} 
\sum_{\gamma_{\chi}}
|\hP (i\gamma_{\chi})|^2.
\]
Here the superscript $\star$ indicates that the sum is restricted to primitive characters $\chi \,(\text{mod }{q})$, and the last sum is over all non-trivial zeros $\frac12 + i \gamma_{\chi}$ of $L(s, \chi)$, counted with multiplicity. Define the $q$-analogue of Montgomery's function $F(\alpha, T)$ by 
\begin{align}\label{20230712_14:47}
F_{\Phi}(\alpha,Q) := \frac{1}{N_\Phi (Q)}  \sum_{q} \frac{W(q/Q)}{\varphi(q)} {\sumstar_{\chi \,(\text{mod }{q})}}  \left| \sum_{\gamma_\chi } \hP \left( i\gamma_\chi \right)Q^{i \alpha \gamma_\chi }\right|^2.
\end{align}
Chandee, Lee, Liu, and Radziwi\l\l  \,\cite{CLLR} proved an asymptotic formula for $F_{\Phi}(\alpha,Q)$ similar to \eqref{F formula} for $\alpha \in (-2,2)$, showing in particular that $F_{\Phi}(\alpha,Q) \sim 1$ when $1 \leq |\alpha| <2$ (see the precise statement in Lemma \ref{Lem8_Dirichlet} below). They conjectured, in analogy with Montgomery’s original conjecture for $F(\alpha, T)$, that one should have $F_{\Phi}(\alpha,Q) \sim 1$ for all $\alpha \geq 1$. Adapting the Fourier optimization framework of \cite{CCChiM}, E.~Quesada-Herrera \cite{QE} established effective upper and lower bounds for the integrals $\frac{1}{\ell}\int_{b}^{b+\ell}F_{\Phi}(\alpha,Q)  \,\d\alpha$ for all $b\geq 1$ and $\ell >0$. In particular, in the long average regime, she established the following inequalities. Assuming GRH, for $b \geq 1$ and large $\ell$ (uniformly on $b$ for the upper bound, and $\ell \geq \ell_0(b)$ for the lower bound), one has 
\begin{equation}\label{20230901_17:45}
0.9821 + o(1) <  \frac{1}{\ell} \int_b^{b+\ell}  F_{\Phi}(\alpha,Q) \, \d\alpha  < 1.0776 + o(1)\,,
\end{equation}
as $Q \to \infty$. Here we prove the following conceptual result.

\begin{theorem}\label{main3_0901}
Assume GRH for Dirichlet $L$-functions, let $b \geq 1$, and let $\varepsilon>0$ be arbitrary. Then, for large $\ell$, one has
\begin{equation}\label{20230901_17:40}
1 + c_0 \, ({\bf C_2} -1) - \varepsilon  + o(1) < \frac{1}{\ell} \int_b^{b+\ell}  F_{\Phi}(\alpha,Q) \, \d\alpha  <  {\bf C_2} + \varepsilon + o(1),
\end{equation}
as $T \to \infty$, with $\ell \geq \ell_0(\varepsilon)$ for the upper bound and $\ell \geq \ell_0(b,\varepsilon)$ for the lower bound, where the constant ${\bf C_2}$ is defined in \eqref{c 2}. 
\end{theorem}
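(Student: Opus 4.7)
The plan is to adapt the Fourier optimization framework of \cite{CCChiM, QE} to the Dirichlet $L$-function setting, following the blueprint of the proofs of Theorem \ref{main} and Theorem \ref{main2_0901}. Two structural improvements are incorporated: the admissible test functions are drawn from the broader Cohn-Elkies class \cite{CE} instead of the strictly bandlimited class employed in \cite{QE}, and an additional averaging degree of freedom is introduced into the Fourier optimization problem. The crucial asymptotic input is Lemma \ref{Lem8_Dirichlet}, which provides the analog of \eqref{F formula} for $F_\Phi(\alpha, Q)$ on the full range $|\alpha| < 2$---one full unit wider than is available for $F(\alpha, T)$---and this wider range of asymptotic control is what drives the improved bound over \eqref{20230901_17:45}.

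For the upper bound I start from the Fourier inversion identity expressing $\int \widehat{R}(\alpha) F_\Phi(\alpha, Q)\, \d\alpha$ in terms of a weighted sum over differences of zeros $\gamma_\chi - \gamma_{\chi'}$ on the $L$-function side (the $q$-aspect analog of \eqref{inversion}, as derived in \cite{CLLR, QE}). I then choose a test function $R$ in the Cohn-Elkies class, satisfying suitable sign conditions on $R$ and its Fourier transform, such that $\widehat{R}$ dominates the even indicator of $[b, b+\ell]$. Since $F_\Phi \geq 0$, the average of $F_\Phi$ on $[b, b+\ell]$ is bounded above by $\tfrac{1}{2\ell} \int \widehat{R}(\alpha) F_\Phi(\alpha, Q)\, \d\alpha$. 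The Cohn-Elkies sign condition controls the $L$-function side while Lemma \ref{Lem8_Dirichlet} evaluates the contribution from $|\alpha| < 2$. After letting $Q \to \infty$ and then $\ell \to \infty$, infimizing over admissible $R$ together with the new averaging parameters produces the constant ${\bf C_2}$ defined in \eqref{c 2}.

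For the lower bound I use the dual construction, selecting $R$ whose Fourier transform is majorized by the even indicator of $[b, b+\ell]$. The key difficulty is that on $|\alpha| \geq 2$ no asymptotic for $F_\Phi$ is available, only positivity, so the tail contribution $\int_{|\alpha| \geq 2} \widehat{R}(\alpha) F_\Phi(\alpha, Q)\, \d\alpha$ must be controlled via the upper bound ${\bf C_2}$ on long averages of $F_\Phi$ obtained in the previous step. The extremal constant $c_0 = \min_{x \in \R} (\sin x)/x$ enters through the comparison test function: the worst-case deficit produced by the tail is captured precisely by the term $c_0 ({\bf C_2} - 1)$ in the stated lower bound, in exact parallel with the lower-bound arguments for Theorems \ref{main} and \ref{main2_0901}.

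The main obstacle is the lower-bound step, specifically constructing a Cohn-Elkies test function whose tail behavior realizes the extremal constant $c_0$ while keeping $\widehat{R}$ pointwise dominated by the indicator of the target interval. Once this construction is in place, the remaining bookkeeping---passage to the long-$\ell$ regime, routine estimates of the error coming from Lemma \ref{Lem11_CG} type bounds, and verification that the new averaging variables can be optimized jointly with the test function---is a direct adaptation of the methods of \cite{CCChiM, QE}.
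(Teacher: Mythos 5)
Your high-level plan correctly identifies the key inputs---the $q$-aspect Fourier inversion identity, Lemma~\ref{Lem8_Dirichlet} of Chandee--Lee--Liu--Radziwi{\l}{\l} giving the asymptotics of $F_\Phi$ on the wider range $|\alpha|<2$, positivity of $F_\Phi$, the Cohn--Elkies class of test functions, and the strategy of reducing to the already-established upper bound when handling the error in the lower-bound step. However, the architecture you describe is not the one the paper uses, and I do not see how your version produces the stated constants.

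The paper does \emph{not} construct a single test function $R$ with $\widehat{R}$ majorizing (or minorizing) the indicator of $[b,b+\ell]$. Instead the test function $g\in\mathcal{A}_\Delta$ (a condition on the \emph{sign of $\widehat g$ outside $[-\Delta,\Delta]$}, not a majorant condition) is used once to obtain the a priori bound $\int_{\R}\widehat g\,F_\Phi\le\rho_\Delta(g)+o(1)$ via $F_\Phi\ge 0$. The majorant/minorant of $\chi_{[-\ell/2,\ell/2]}$ is then manufactured separately by the ``shadow'' construction $G_L:=\widehat g*\chi_{[-L,L]}$; writing $\int F_\Phi\, G_L$ as a zero-sum yields a weight $\widecheck\chi_{[-L,L]}(x)\cdot g(x)=\frac{\sin(2\pi Lx)}{\pi x}\,g(x)$, and the two universal bounds $-2Lc_0\le\frac{\sin(2\pi Lx)}{\pi x}\le 2L$ (with $c_0=\min(\sin x)/x$) are what convert the one-time bound $\rho_\Delta(g)$ into the upper and lower bounds. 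In particular, $c_0$ does not arise from the ``tail behavior'' of a cleverly constructed Cohn--Elkies function, and there is no extra construction needed to ``realize'' it: it is the pointwise bound on the sinc kernel produced by the continuous averaging. Your phrase ``an additional averaging degree of freedom'' gestures at this, but the mechanism you actually describe---picking $R$ with $\widehat R$ dominating the indicator and invoking the ``Cohn--Elkies sign condition on the $L$-function side''---would not close: eventual non-positivity of $R$ gives no clean upper bound on the zero sum, and without the convolution step you cannot reduce a long-interval average to a single application of $\rho_\Delta$.

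Two smaller points. First, since (EP2) is posed with $\Delta=2$ but Lemma~\ref{Lem8_Dirichlet} only holds uniformly on $|\alpha|\le 2-\varepsilon$, the paper dilates the near-optimal $g\in\mathcal{A}_2^{\mathrm{BL}}$ to $g_\Delta(x)=g(\Delta x/2)\in\mathcal{A}_\Delta^{\mathrm{BL}}$ and sends $\Delta\to 2$; this limiting step needs to be present. Second, the reduction from bandlimited $g$ in the remark after (EP1) means it suffices to carry out the argument for $\widehat g$ compactly supported; this is what makes $I_L$ have bounded measure independent of $\ell$, which is why the error term $\int_{I_L}F_\Phi$ is $O_g(1)$ and can be absorbed. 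That bookkeeping relies on the shadow structure and is not obviously available in your majorant approach. In short: your proposal names the right ingredients but substitutes the paper's convolution-averaging device with a direct majorant scheme that would not reproduce the constants, and misattributes the source of $c_0$.
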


From Lemma \ref{Lem11_CG} and Proposition \ref{Prop12} one observes that ${\bf C_2} < 1.0650$. Using \eqref{20230709_16:18} and this upper bound for ${\bf C_2}$ in Theorem \ref{main3_0901}, one arrives at the following refinement of \eqref{20230901_17:45}.
\begin{corollary}\label{Thm3}
Assume GRH for Dirichlet $L$-functions, and let $b \geq 1$. Then, for large $\ell$, 
one has
\begin{equation*}
0.9858 + o(1) < \frac{1}{\ell} \int_b^{b+\ell}  F_{\Phi}(\alpha,Q) \, \d\alpha  < 1.0650 + o(1),
\end{equation*}
as $Q \to \infty$, uniformly on $b$ for the upper bound and with $\ell \geq \ell_0(b)$ for the lower bound.
\end{corollary}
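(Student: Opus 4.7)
The plan is to obtain Corollary \ref{Thm3} as an immediate numerical consequence of Theorem \ref{main3_0901}, combined with the upper bound ${\bf C_2} < 1.0650$ supplied by Lemma \ref{Lem11_CG} and Proposition \ref{Prop12}, and with the value $c_0 = -0.217233\ldots$ from \eqref{20230709_16:18}. No further Fourier optimization is required here: the conceptual content of the improvement sits inside Theorem \ref{main3_0901}, and the present corollary is simply its quantitative packaging.

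For the upper bound, I would substitute ${\bf C_2} < 1.0650$ into the right side of \eqref{20230901_17:40} to obtain
\[
\frac{1}{\ell} \int_b^{b+\ell}  F_{\Phi}(\alpha,Q) \, \d\alpha  <  {\bf C_2} + \varepsilon + o(1) < 1.0650 + \varepsilon + o(1),
\]
and then fix $\varepsilon$ small enough that ${\bf C_2} + \varepsilon < 1.0650$ still holds. The residual gets absorbed into $o(1)$ as $Q \to \infty$, uniformly in $b$, yielding the stated upper bound.

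For the lower bound, I would analyze the leading term $1 + c_0({\bf C_2} - 1)$ of the left side of \eqref{20230901_17:40}. If ${\bf C_2} < 1$, then $c_0({\bf C_2} - 1) > 0$ (since $c_0 < 0$) and immediately $1 + c_0({\bf C_2} - 1) > 1 > 0.9858$. If instead ${\bf C_2} \geq 1$, then the monotonicity of $x \mapsto c_0 x$ (decreasing, because $c_0 < 0$) combined with ${\bf C_2} - 1 < 0.0650$ gives
\[
c_0({\bf C_2} - 1) > c_0 \cdot 0.0650 = -(0.217233\ldots)(0.0650) > -0.01412,
\]
so $1 + c_0({\bf C_2} - 1) > 0.98588 > 0.9858$. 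Choosing $\varepsilon > 0$ small enough that $1 + c_0({\bf C_2} - 1) - \varepsilon > 0.9858$ still holds, and absorbing $\varepsilon$ into $o(1)$, finishes the argument. The only step needing any attention is the elementary arithmetic check $(0.217233)(0.0650) < 0.01412$; there is no genuine obstacle beyond what is already settled in Theorem \ref{main3_0901} and the bounds on ${\bf C_2}$.
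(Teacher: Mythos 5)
Your approach is exactly the paper's: Theorem \ref{main3_0901} plus the numerical bound ${\bf C_2}<1.0650$ from Lemma \ref{Lem11_CG} and Proposition \ref{Prop12}, with a final choice of small $\varepsilon$ absorbed into the $o(1)$. However, the one arithmetic check you single out as the crux is actually false: one has
\[
0.217233 \times 0.0650 \;=\; 0.014120145\ldots \;>\; 0.01412,
\]
and with more digits of $c_0=-0.2172336\ldots$ the product is $\approx 0.0141202$, so the intermediate claim $c_0({\bf C_2}-1) > -0.01412$ and the resulting ``$1 + c_0({\bf C_2}-1) > 0.98588$'' are both slightly overstated. The correct bound obtainable from ${\bf C_2}<1.0650$ is
\[
1 + c_0({\bf C_2}-1) \;>\; 1 - |c_0|\cdot 0.0650 \;=\; 0.985879\ldots \; ,
\]
which is smaller than your $0.98588$ but still comfortably larger than $0.9858$. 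So the corollary does follow, but you should replace the inequality ``$(0.217233)(0.0650) < 0.01412$'' by, say, ``$|c_0|\cdot 0.0650 < 0.01413$, hence $1+c_0({\bf C_2}-1) > 0.98587 > 0.9858$,'' and then pick $\varepsilon>0$ small enough accordingly.
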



Note how close the upper and lower bounds in Corollary \ref{Thm3} are to the conjectured value of $1$. The conceptual gain comes from the fact that, in Theorem \ref{main3_0901}, 
we apply a new Fourier optimization framework (when compared to \cite{CCChiM, QE}) tailored to this situation, with a new averaging mechanism and the full use of the Cohn--Elkies class of test functions.

\medskip

\noindent {\sc Remark.}~Analogues of Montgomery’s function $F(\alpha, T)$ have also been studied for other families of $L$-functions. Recently, the work of Chandee, Klinger-Logan, and Li \cite[Theorem 1.1]{CKL} established an analogue of \eqref{F formula} for an average over a family of $\Gamma_1(q)$ $L$-functions, in the same larger range $|\alpha| <2$, under GRH (for both this family of automorphic $L$-functions and for Dirichlet $L$-functions). We remark that the same Fourier optimization framework of our Theorem \ref{main3_0901} works in this case, and we arrive at the same conclusion for the average value (over a long range) of the analogue of $F(\alpha, T)$ for this family; see \cite[\S 2.3]{QE} for details.

\subsection{Notation} For a Lebesgue measurable set $A$ we denote by $\chi_A$ its characteristic function and by $|A|$ its Lebesgue measure. We set $x_+ :=\max\{x,0\}$. 

\section{Fourier optimization framework}\label{Sec_FOF}
In this section, we work under RH and set up the Fourier optimization framework in order to prove Theorem \ref{main}. The available number theoretic information here is given by \eqref{F formula}. We are somewhat inspired by the framework of Carneiro, Chandee, Chirre, and Milinovich \cite{CCChiM} to produce upper and lower bounds for the integral of $F(\alpha,T)$ over bounded intervals, but there are a few important conceptual changes here (also certain notation changes to better suit our outline).

\smallskip

Throughout this paper, let $\mathcal{A}_1$ be the class of continuous, even, and non-negative functions $g \in L^1(\R)$ such that $\widehat{g}(\alpha) \leq 0$ for $|\alpha| \geq 1$. One can check, via an approximation of the identity, that if $g \in \mathcal{A}_{1}$, then $\widehat{g} \in L^1(\R)$. Define the functional $\rho_1$, acting on functions $g \in \mathcal{A}_1$, by
\begin{equation}\label{20201217_11:41}
\rho_1(g) := \widehat{g}(0) +  \int_{-1}^1 \widehat{g}(\alpha)\,|\alpha| \, \mathrm{d}\alpha.
\end{equation}
This quantity is always non-negative since $|\widehat{g}(\alpha)| \leq \widehat{g}(0)$ for all $\alpha \in \R$. In fact, \eqref{20201217_11:41} is strictly positive if $g \neq {\bf 0}$. If $g \in \mathcal{A}_1$, from \eqref{inversion}, the fact that $F$ is non-negative, and \eqref{F formula}, we observe that
\begin{align}\label{20201216_11:09}
\begin{split}
  \frac{1}{N(T)} \sum_{0<\gamma,\gamma'\le T} & g\!\left((\gamma-\gamma') \frac{\log T}{2\pi} \right)  w(\gamma-\gamma')  = \ \int_{-\infty}^{\infty}\widehat{g}(\alpha) \, F(\alpha,T) \, \mathrm{d}\alpha  \\
  & \leq \int_{-1}^{1}\widehat{g}(\alpha) \, F(\alpha,T) \, \mathrm{d}\alpha = \rho_1(g) + o(1)\,,
 \end{split}
\end{align}
as $T \to \infty$. We shall see that our whole strategy is built from inequality \eqref{20201216_11:09}.

\medskip

Let us introduce our first extremal problem.
\subsubsection*{Extremal Problem 1 {\rm (EP1)}} Find the infimum
\begin{equation}\label{20230710_14:44}
{\bf C_1} := \inf_{\substack{{\bf 0} \neq g \in \mathcal{A}_1 \\ g(0) > 0}} \\ \frac{\rho_1(g)}{g(0)}.
\end{equation}

\medskip

\noindent {\sc Remark.}~Let $\mathcal{A}^{BL}_1 \subset \mathcal{A}_1$ be the subclass of bandlimited functions in $\mathcal{A}_1$, i.e.~the functions $g \in \mathcal{A}_1$ such that $\widehat{g}$ has compact support. We note that the search for the infimum in {\rm (EP1)} can be restricted to the subclass $\mathcal{A}^{BL}_1$. In fact, given $\delta >0$, let $g \in \mathcal{A}_1$, with $g(0) >0$, be such that $\frac{\rho_1(g)}{g(0)} \leq {\bf C_1}  + \delta$. Let $\varphi$ be an even Schwartz function such that $\varphi \geq 0$, $\widehat{\varphi} \geq 0$, $\int_{\R}\varphi(x)\,\dx = 1$ and $\supp(\widehat{\varphi}) \subset [-1,1]$. Let $\varphi_{\lambda}(x) := \lambda^{-1} \varphi\big( \lambda^{-1} x\big)$ be the usual approximation of the identity and set $g_{\lambda} := g * \varphi_{\lambda} \in \mathcal{A}^{BL}_1$. Then, one can verify that $\frac{\rho_1(g_{\lambda} )}{g_{\lambda} (0)} \to \frac{\rho_1(g)}{g(0)}$ as $\lambda \to 0$, and hence, for $\lambda$ small, one gets $\frac{\rho_1(g_{\lambda} )}{g_{\lambda} (0)} \leq {\bf C_1}  + 2\delta$, as desired.

\subsection{Proof of the upper bound in Theorem \ref{main}} We first discuss upper bounds for the integral of $F(\alpha, T)$ over long intervals. Here we present a sharper strategy when compared to its counterpart for upper bounds in \cite[Problem (EP4)]{CCChiM}. As we shall see in the proof below, the introduction of a continuous averaging mechanism, rather than a discrete one, is the new insight that ultimately yields the gain (see the remark after the proof of Lemma \ref{Prop_20201217_09:51} for more details). We recast our target result as the following lemma.

\begin{lemma} \label{Prop_20201217_09:51}
Assume RH, let $b \in \R$, and let $\varepsilon >0$. Then, for $\ell \geq \ell_0(\varepsilon)$, we have
\begin{align}\label{20230710_17:38}
	\limsup_{T \to \infty} \frac{1}{\ell} \int_b^{b + \ell} F(\alpha,T) \, \d\alpha \le {\bf C_1} + \varepsilon.
\end{align}	
\end{lemma}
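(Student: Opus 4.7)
Proof plan. The plan is to apply inequality \eqref{20201216_11:09} to an $\ell$-dependent test function in $\mathcal{A}_1$, designed to encode the interval $[b, b+\ell]$ in its Fourier transform. Fix $\varepsilon > 0$. By the Remark following \eqref{20230710_14:44}, I can select a bandlimited $g \in \mathcal{A}_1$ with $g(0) = 1$ and $\rho_1(g) \le {\bf C_1} + \varepsilon/3$, with $\widehat{g}$ supported in some fixed $[-R, R]$. This $g$ plays the role of a base Cohn--Elkies test function, and the remaining task is to bootstrap it into a test function that detects the long interval.

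The central step is to construct, for sufficiently large $\ell$, an auxiliary function $G_\ell \in \mathcal{A}_1$ with the approximate Fourier majorization
\begin{equation*}
-\widehat{G_\ell}(\alpha) \gtrsim \chi_{[b, b+\ell]}(\alpha) + \chi_{[-b-\ell, -b]}(\alpha) \quad \text{for } |\alpha| \geq 1,
\end{equation*}
together with the scaling $\rho_1(G_\ell) \leq 2\ell({\bf C_1} + \varepsilon/2) + o(\ell)$. The construction proceeds by a \emph{continuous averaging} of $g$ against a non-negative kernel whose Fourier transform approximates the indicator function of $[b, b+\ell] \cup [-b-\ell, -b]$. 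In contrast with the discrete sum-of-shifts averaging of \cite[\S5]{CCChiM}, the continuous average smooths out the oscillations that each individual shift contributes, which lets $-\widehat{G_\ell}$ form an essentially flat plateau over the target set while simultaneously preserving $G_\ell \ge 0$ and $\widehat{G_\ell} \le 0$ on $|\alpha| \ge 1$.

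With $G_\ell$ in hand, inequality \eqref{20201216_11:09} applied to $G_\ell$ reads
\begin{equation*}
\int_{|\alpha| \ge 1} \bigl(-\widehat{G_\ell}(\alpha)\bigr) F(\alpha, T) \, \d\alpha \le \rho_1(G_\ell) + o_T(1),
\end{equation*}
and combining with the majorization and with the non-negativity and evenness of $F$ gives
\begin{equation*}
2 \int_b^{b+\ell} F(\alpha, T) \, \d\alpha \le \rho_1(G_\ell) + o_T(1) \le 2\ell({\bf C_1} + \varepsilon/2) + o(\ell) + o_T(1).
\end{equation*}
Dividing by $2\ell$ and choosing $\ell \ge \ell_0(\varepsilon)$ large enough that $o(\ell)/\ell < \varepsilon/2$ yields the asserted bound $\limsup_T \tfrac{1}{\ell}\int_b^{b+\ell} F \, \d\alpha \le {\bf C_1} + \varepsilon$.

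The main obstacle is the construction of $G_\ell$ in the second step. Naive candidates such as $G_\ell(x) = g(x) p_\ell(x)$, with $p_\ell$ a translated Fej\'er kernel centered at the target frequency band, produce the desired plateau for $-\widehat{G_\ell}$ but violate the sign condition $\widehat{G_\ell}(\alpha) \le 0$ on $|\alpha| \ge 1$: the Fourier convolution $\widehat{g} \ast \widehat{p_\ell}$ integrates $\widehat{g}$ across the band $|\beta| < 1$, where $\widehat{g}$ is positive, and the resulting function spills above zero near the edges of the constraint region. The continuous-averaging construction has to be tuned so that the kernel's Fourier support interacts only with the safe region of $\widehat{g}$ while the resulting $G_\ell$ remains pointwise non-negative. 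Verifying that the ratio $\rho_1(G_\ell)/(2\ell)$ approaches ${\bf C_1}$ as $\ell \to \infty$ is precisely where the near-extremizer for (EP1) enters decisively, and this is the heart of the argument.
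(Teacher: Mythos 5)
There is a genuine gap: the object on which your entire argument rests, the function $G_\ell \in \mathcal{A}_1$ with $-\widehat{G_\ell} \gtrsim \chi_{[b,b+\ell]} + \chi_{[-b-\ell,-b]}$ on $|\alpha|\ge 1$ and $\rho_1(G_\ell) \le 2\ell({\bf C_1}+\varepsilon/2)+o(\ell)$, is never constructed. You yourself note that the natural candidates $g\cdot p_\ell$ violate the sign condition $\widehat{G_\ell}\le 0$ on $|\alpha|\ge 1$, and you leave the required ``tuning'' unspecified; since this construction is, in your own words, ``the heart of the argument,'' the proof is incomplete as written. The difficulty is not cosmetic: any admissible $G_\ell$ has $\int_{\R}\widehat{G_\ell} = G_\ell(0)\ge 0$ while you need $\widehat{G_\ell}\le -1$ on a set of measure $2\ell$, so the compensating positive mass of $\widehat{G_\ell}$ must be confined entirely to $[-1,1]$ and simultaneously kept small enough in the weighted sense that $\rho_1(G_\ell)/(2\ell)\to {\bf C_1}$; it is not at all clear this can be done, and you give no mechanism for it. (A smaller issue: your step $2\int_b^{b+\ell}F \le \int_{|\alpha|\ge1}(-\widehat{G_\ell})F$ needs $b\ge 1$, whereas the lemma allows $b\in\R$; the portion of $[b,b+\ell]$ inside $[-1,1]$ contributes only $O(1)=o(\ell)$, so this is repairable.)

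The paper's proof avoids this obstruction entirely by \emph{not} requiring the auxiliary function to lie in $\mathcal{A}_1$, and by using the opposite sign convention. It forms $G_L = \widehat{g}*\chi_{[-L,L]}$ with $L=\ell/2+M$, which equals $g(0)=1$ on $[-\ell/2,\ell/2]$ and hence \emph{majorizes} $\chi_{[b,b+\ell]}$ after translation, up to two transition intervals of total length $4M$ independent of $\ell$. The inverse transform of the translated $G_L$ is $T^{i(b+\ell/2)(\gamma-\gamma')}\,\widecheck{\chi}_{[-L,L]}\cdot g$, which is not non-negative, but its modulus is bounded pointwise by $2L\,g$; since every term $g(\cdot)\,w(\gamma-\gamma')$ in the pair correlation sum is non-negative, one gets $\int_{\R}F\,G_L(\cdot-b-\tfrac{\ell}{2})\,\d\alpha \le 2L\rho_1(g)+o(1)$ from a single application of \eqref{20201216_11:09} to the fixed near-extremizer $g$, and $2L/\ell\to 1$. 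In other words, the admissibility burden stays on $g$ alone, and only a trivial $L^\infty$ bound on the multiplier $\widecheck{\chi}_{[-L,L]}$ is needed; your plan instead transfers the whole difficulty into an unresolved extremal construction. If you want to complete your route, you would need to exhibit $G_\ell$ explicitly and verify all three of $G_\ell\ge 0$, $\widehat{G_\ell}\le 0$ on $|\alpha|\ge 1$ with the quantitative minorization, and the asymptotics of $\rho_1(G_\ell)$; switching to the paper's majorant-plus-modulus-bound mechanism is the cleaner fix.
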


\begin{proof} Given $\varepsilon >0$, from the remark after (EP1), let $g \in \mathcal{A}^{BL}_1$ (with $g(0) >0$), normalized so that $g(0) =1$, be such that 
\begin{equation}\label{20230710_17:29}
\rho_1(g) \leq {\bf C_1} + \frac{\varepsilon}{2}.
\end{equation}
Assume that $\supp(\widehat{g}) \subset [-M, M]$. We structure the rest of the proof in four steps.

\medskip

\noindent {\it Step 1:~The shadow construction.}  In this first step, we introduce a function that imitates the characteristic function of an interval, in a suitable sense for our purposes. For $L > M$, define the function
\begin{align*}
G_L(\alpha) := (\widehat{g}*\chi_{[-L, L]})(\alpha) =  \int_{-L}^{L}\widehat{g} (\alpha - y) \,\dy = \int_{\alpha - L}^{\alpha + L}\widehat{g} (t) \,\dt.
\end{align*}
Observe that: 
\begin{itemize}
\item For $\alpha \in [-L +M, L-M]$, we have $[-M, M] \subset [\alpha  - L, \alpha +L]$. Hence $G_L(\alpha) = \int_{\R}\widehat{g} (t) \,\dt = g(0) = 1.$

\smallskip

\item For $\alpha \in (-\infty, -L - M] \cup [L+M, \infty)$, we have $\big|\, [-M, M] \cap [\alpha - L, \alpha +L]\,\big | = 0$. Hence $G_L(\alpha) = 0$. 

\smallskip

\item For $\alpha \in I_L:= [-L - M, -L +M] \cup [L-M, L+M]$, we have $\big|G_L(\alpha)\big| \leq \|\widehat{g}\|_1$.
\end{itemize}

\medskip

\noindent {\it Step 2:~The almost majorant.} In the construction in Step 1, let $L = \frac{\ell}{2} + M$. Then observe that 
\begin{align*}
G_L(\alpha) \geq \chi_{[-\ell/2\,,\,\ell/2]}(\alpha) - \|\widehat{g}\|_1\cdot \chi_{I_L}(\alpha).
\end{align*}
With a translation by $b + \frac{\ell}{2}$, we get
\begin{align}\label{20230710_15:47}
G_L\big(\alpha - b - \tfrac{\ell}{2}\big) \geq \chi_{[b\,,\,b + \ell]}(\alpha) - \|\widehat{g}\|_1\cdot\chi_{b + \frac{\ell}{2}+I_L}(\alpha).
\end{align}

\medskip

\noindent{\it Step 3:~The key computation.} Recall that the inverse Fourier transform of $\chi_{[-L, L]}$ is given by
\begin{equation}\label{20230710_22:34}
\widecheck{\chi}_{[-L, L]}(x) = \frac{\sin(2L \pi x)}{\pi x},
\end{equation}
which is bounded in absolute value by $2L$. Then, from  \eqref{inversion}, \eqref{20201216_11:09}, and \eqref{20230710_15:47}, we have 
\begin{align}\label{20230708_10:36}
\begin{split}
\int_b^{b + \ell}  F(\alpha, T) &  \, \d\alpha  -  \|\widehat{g}\|_1  \int_{b + \frac{\ell}{2} + I_L}  F(\alpha, T) \, \d\alpha \le  \int_{\mathbb{R}} F(\alpha, T) \,G_L\big(\alpha - b - \tfrac{\ell}{2}\big)  \, \d\alpha
\\
&= \frac{1}{N(T)} \ \  {\rm Re} \sum_{0<\gamma,\gamma'\le T} T^{ i (b + \frac{\ell}{2})(\gamma - \gamma')}  \, \big(\widecheck{\chi}_{[-L, L]} \ \cdot \ g\big)\!\left((\gamma-\gamma')\frac{\log T}{2\pi}\right) \,w(\gamma-\gamma') 
\\
&\le \frac{2L}{N(T)}   \sum_{0<\gamma,\gamma'\le T} g\!\left((\gamma-\gamma')\frac{\log T}{2\pi}\right) \,w(\gamma-\gamma')  
\\
& \leq  2L \cdot \rho_1(g) + o(1).
\end{split}
\end{align}

\smallskip

\noindent {\it Step 4:~Conclusion.} From \eqref{20230708_10:36}, we conclude that
\begin{align}\label{20230710_17:34}
\limsup_{T \to \infty} \, \frac{1}{\ell}\int_b^{b + \ell}  F(\alpha, T)   \, \d\alpha  \leq \frac{2L}{\ell} \, \rho_1(g) + \frac{1}{\ell} \limsup_{T \to \infty}\, \|\widehat{g}\|_1 \int_{b + \frac{\ell}{2} + I_L}  F(\alpha, T)\,\d\alpha.
\end{align}
Now observe that the total measure of the two intervals in $b + \frac{\ell}{2} + I_L$ is $4M$, and hence it depends only $g$ but not on $b$ or $\ell$. In this case, we already know that
\begin{equation}\label{20230710_17:20}
\limsup_{T \to \infty}\  \|\widehat{g}\|_1 \int_{b + \frac{\ell}{2} + I_L}  F(\alpha, T) \, \d\alpha = O_g(1).
\end{equation}
From \eqref{20230710_17:29}, \eqref{20230710_17:34}, and \eqref{20230710_17:20}, we arrive at \eqref{20230710_17:38}. This completes the proof of the lemma. 
\end{proof}

\smallskip

\noindent {\sc Remark.}~A crucial passage in this proof that is different from the argument in \cite[p.~21]{CCChiM} is in the definition the `almost majorant' $G_L$ of $\chi_{[-\ell/2,\ell/2]}$. In \cite[p.~21]{CCChiM}, the function that plays an analogous role has the form $\widetilde{G}(\alpha) = \sum_{j=1}^N \widehat{g}(\alpha - \xi_j)$, for a certain choice of points $\xi_j$. Moving from a discrete setting to a continuous one, i.e.~by considering an integral instead of a finite sum, we are able to average out the potential fluctuations of the function $G$, leading to the sharper extremal problem (EP1) when compared to its counterpart \cite[Problem (EP4)]{CCChiM} in the previous paper. See Figure \ref{figure1} for an illustration of this philosophy.

\begin{figure} 
\includegraphics[scale=.55]{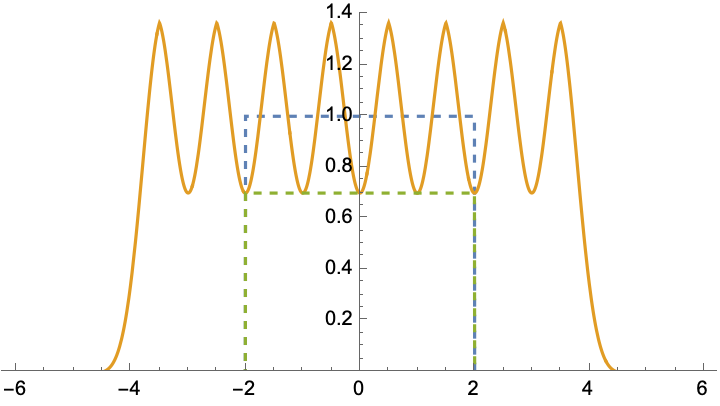} \qquad 
\includegraphics[scale=.55]{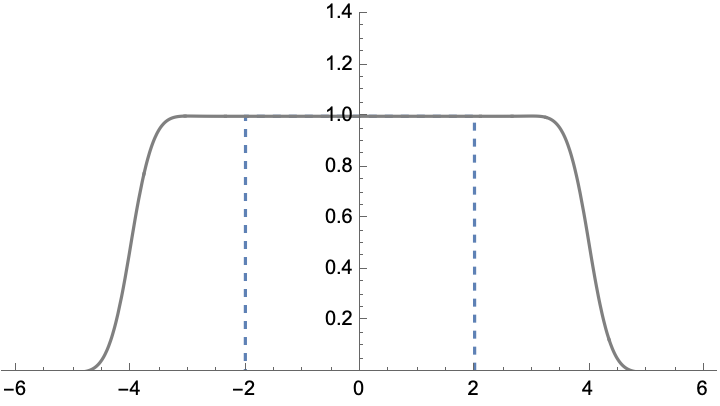} 
\caption{ Here is an illustration of why this strategy is sharper than the one in \cite{CCChiM}. Take $g \in \mathcal{A}^{BL}_1$ with $g(0) =1$ and, say, $\supp(\widehat{g}) \subset [-2, 2]$. In this particular example we consider the Fourier transform pairs $f(x) = (x^2 + 1 -\frac{1}{2\pi})\,e^{-\pi x^2}$ and $\widehat{f}(t) = (1 - t^2)\,e^{-\pi t^2}$, $K_2(x) = 2(\sin(2\pi x)/2\pi x)^2$ and $\widehat{K_2}(t) = (1 - |t/2|)_+$, and set  $g = c\, (f*K_2)$ and $\widehat{g} = c\, \widehat{f}\, \widehat{K_2}$, where $c$ is a constant so that $g(0)=1$. The graph in orange on the left represents the discrete strategy of \cite{CCChiM}, considering the sum $\widetilde{G}_4(t) = \sum_{k=0}^7\widehat{g}(t - \frac{7}{2} +k)$. The graph in gray on the right represents the continuous strategy of this paper, considering $G_4(t) :=  \int_{-4}^{4}\widehat{g} (t - y) \,\dy$. Note that $\widetilde{G}_4(t)$ roughly approximates $\chi_{[-2,2]}(t)$ but, due to its fluctuations, it is indeed only a majorant of $\lambda \, \chi_{[-2,2]}(t)$ for some $\lambda <1$, whereas $G_4(t)$ is exactly equal to $\chi_{[-2,2]}(t)$ for $-2 \leq t \leq 2$. } 
\label{figure1}
\end{figure}

\subsection{Proof of the lower bound in Theorem \ref{main}} We now turn our attention to lower bounds for the integral of $F(\alpha, T)$ over long intervals. 
We recast our target result as the next lemma, that presents a sharper strategy when compared to its counterpart for lower bounds in \cite[Problem (EP5)]{CCChiM}. As it was the case of the upper bounds, the introduction of a continuous averaging mechanism is the insight that yields the improvement over previous results. 

\begin{lemma}\label{Lem4_lower_bound}
Assume RH, let $b\ge 1$, and let $\varepsilon >0$. Then, for $\ell \geq \ell_0(b, \varepsilon)$, we have
\begin{equation}\label{20230709_17:40}
\liminf_{T \to \infty} \,\frac{1}{\ell} \int_b^{b+\ell}  F(\alpha,T) \, \d\alpha \geq 1 + c_0 \, ({\bf C_1}-1) - \varepsilon.
\end{equation}
\end{lemma}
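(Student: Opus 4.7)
The plan is to mirror the four-step structure of the upper-bound proof (Lemma \ref{Prop_20201217_09:51}), reusing the same bandlimited test function $g\in\mathcal{A}_1^{BL}$ with $g(0)=1$, $\supp(\widehat{g})\subset[-M,M]$ and $\rho_1(g)\leq\mathbf{C_1}+\varepsilon/2$, the same shadow function $G_L=\widehat{g}\ast\chi_{[-L,L]}$ with $L=\ell/2+M$, and the same shift $\xi=b+\ell/2$. The change of strategy is to invoke the sharp one-sided bound $\widecheck{\chi}_{[-L,L]}(x)\geq 2Lc_0$ (from the definition of $c_0$ in \eqref{20230709_16:18}) in place of $|\widecheck{\chi}_{[-L,L]}(x)|\leq 2L$. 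Dualizing Step 2 of the upper-bound proof, the same $G_L$ becomes an almost minorant,
$$\chi_{[b,b+\ell]}(\alpha)\geq G_L(\alpha-\xi)-\|\widehat{g}\|_1\,\chi_{b+\frac{\ell}{2}+I_L}(\alpha),$$
and pairing with $F\geq 0$ yields
$$\int_b^{b+\ell}F\,\d\alpha \;\geq\; \int F(\alpha)G_L(\alpha-\xi)\,\d\alpha \;-\; \|\widehat{g}\|_1\int_{b+\frac{\ell}{2}+I_L}F\,\d\alpha,$$
where the transition term is $O_g(1)$ uniformly in $\ell$, by applying Lemma \ref{Prop_20201217_09:51} to the two intervals of total length $4M$ (which is why we need $\ell\geq\ell_0(b,\varepsilon)$).

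The heart of the proof is then a lower bound on $\int F(\alpha)G_L(\alpha-\xi)\,\d\alpha$. From the same Fourier identity used in Step 3 of the upper bound,
$$\int F(\alpha)G_L(\alpha-\xi)\,\d\alpha \;=\; \frac{1}{N(T)}\,\Rep\sum_{0<\gamma,\gamma'\leq T}T^{i\xi(\gamma-\gamma')}\bigl(g\cdot\widecheck{\chi}_{[-L,L]}\bigr)\!\left((\gamma-\gamma')\tfrac{\log T}{2\pi}\right)w(\gamma-\gamma'),$$
the strategy is to decompose the physical-space weight as $g\,\widecheck{\chi}_{[-L,L]}=2Lc_0\,g+g\,(\widecheck{\chi}_{[-L,L]}-2Lc_0)$, the second factor being pointwise in $[0,\,2L(1-c_0)g]$. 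The first piece rearranges to $2Lc_0\int\widehat{g}(\alpha-\xi)F(\alpha)\,\d\alpha$, while the second is controlled in absolute value by $\frac{1}{N(T)}\sum g(\widecheck{\chi}_{[-L,L]}-2Lc_0)w\leq 2L(1-c_0)(\rho_1(g)+o(1))$ via \eqref{20201216_11:09}.

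The hard part will be extracting the correct leading order $+2L$ (which gives the expected $1$ upon dividing by $\ell$): a naive triangle-inequality bound leaves the $c_0$-phase piece only $O(L)$ with \emph{a priori} indefinite sign, insufficient on its own. The needed cancellation should come from a companion application of \eqref{20201216_11:09} to the non-negative physical-space weight $g(x)\bigl(2L-\widecheck{\chi}_{[-L,L]}(x)\bigr)$, whose Fourier transform $2L\widehat{g}(\alpha)-G_L(\alpha)$ is zero-mean; algebraically coupling this identity with the decomposition above should eliminate the indefinite $\int\widehat{g}(\cdot-\xi)F$ factor and leave the clean leading term $2L\cdot g(0)$ plus the prescribed correction $2Lc_0(\rho_1(g)-g(0))=2Lc_0(\rho_1(g)-1)$, up to an $o(L)$ error.

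Once this key step delivers
$$\int F(\alpha)G_L(\alpha-\xi)\,\d\alpha \;\geq\; 2L + 2Lc_0\bigl(\rho_1(g)-1\bigr) + o(L),$$
combining with the almost-minorant inequality above, dividing by $\ell$, using $2L/\ell=1+2M/\ell\to 1$, $\rho_1(g)\leq \mathbf{C_1}+\varepsilon/2$, and $c_0<0$ (so the $\varepsilon/2$ contribution satisfies $|c_0|\varepsilon/2\leq \varepsilon/2$), one obtains
$$\liminf_{T\to\infty}\,\frac{1}{\ell}\int_b^{b+\ell}F(\alpha,T)\,\d\alpha \;\geq\; 1+c_0(\mathbf{C_1}-1)-\varepsilon,$$
which is \eqref{20230709_17:40} and finishes the plan for $\ell\geq\ell_0(b,\varepsilon)$.
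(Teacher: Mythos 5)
Your setup through Step 2 matches the paper's skeleton, but there is a fundamental structural difference that turns into a genuine gap in Step 3, and the proposed remedy does not close it.

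The paper's proof deliberately works with the \emph{centered} interval $[-\beta,\beta]$ (with $\beta=b+\ell$ and $L=\beta+M$), \emph{not} the shifted interval $[b,b+\ell]$. With no shift, the physical-space kernel is the real, even function $g\cdot\widecheck{\chi}_{[-L,L]}$ and the sum over zero pairs carries no oscillating factor $T^{i\xi(\gamma-\gamma')}$. This is what makes the key step possible: one isolates the diagonal $\gamma=\gamma'$ (which gives exactly $2L\,N(T)^{-1}\sum m_\gamma\ge 2L$, since $\widecheck{\chi}_{[-L,L]}(0)=2L$ and $g(0)=w(0)=1$), and bounds the off-diagonal from below using the pointwise inequality $\sin y/y\ge c_0$ together with $g,w\ge 0$. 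Only afterward does the paper pass to $[b,b+\ell]$ by the elementary identity $\frac{1}{\ell}\int_b^{b+\ell}F=\frac{2\beta}{2\ell}\cdot\frac{1}{2\beta}\int_{-\beta}^{\beta}F - \frac{1}{2\ell}\int_{-b}^{b}F$, using the evenness of $F$; the last term is $O_b(1)/\ell$ and is why $\ell_0$ depends on $b$.

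Your version shifts by $\xi=b+\ell/2$ from the outset, so the Fourier identity produces $\mathrm{Re}\sum T^{i\xi(\gamma-\gamma')}(g\widecheck{\chi}_{[-L,L]})(\cdots)w(\cdots)$. You correctly observe that a naive triangle inequality is not enough, but the proposed repair does not work. First, the companion weight $h_2(x)=g(x)\bigl(2L-\widecheck{\chi}_{[-L,L]}(x)\bigr)$ is not necessarily in $\mathcal{A}_1$: its Fourier transform $2L\widehat{g}(\alpha)-G_L(\alpha)$ has mean zero (as you note), but that does not make it $\le 0$ for $|\alpha|\ge 1$ — indeed near $|\alpha|=L\pm M$ the term $-G_L(\alpha)$ can be strictly positive — so \eqref{20201216_11:09} does not apply to $h_2$. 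Second, and more fundamentally, the ``algebraic coupling'' you describe cannot manufacture the leading $2L$. Writing $A=\int F\widehat{g}(\cdot-\xi)$, $B'=\int F\bigl(G_L-2Lc_0\widehat{g}\bigr)(\cdot-\xi)$, and $B''=\int F\bigl(2L\widehat{g}-G_L\bigr)(\cdot-\xi)$, the two decompositions you cite give $\int F\,G_L(\cdot-\xi)=2Lc_0A+B'=2LA-B''$, hence $A=(B'+B'')/(2L(1-c_0))$ and $\int F\,G_L(\cdot-\xi)=(B'+c_0B'')/(1-c_0)$. The $A$-terms have indeed been eliminated, but both $B'$ and $B''$ are phase-weighted sums whose signs you cannot control, and the diagonal contribution to $B''$ is identically zero (because $h_2(0)=0$): the $2L$ does not reappear. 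Moreover even the crude route of bounding the phase by $1$ on the off-diagonal yields a constant of the form $2-\rho_1(g)$, strictly weaker than the target $1+c_0(\rho_1(g)-1)$, because $|c_0|<1$. In short, introducing the phase forfeits exactly the sign information that makes $c_0$ appear, and no rearrangement of the Fourier-side identities recovers it. The fix is to drop the shift, run Steps 1--4 on $[-\beta,\beta]$, and then transfer to $[b,b+\ell]$ using the evenness of $F$.
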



\begin{proof} We first estimate the centered integral $\int_{-\beta}^{\beta}  F(\alpha,T) \, \d\alpha$ for $\beta=b+\ell$ and then transition to the integral $\int_b^{b+\ell}  F(\alpha,T) \, \d\alpha$. We may assume that $1 + c_0 \, ({\bf C_1}-1)- \varepsilon>0$, as otherwise the inequality in \eqref{20230709_17:40} is trivial. Given $\varepsilon >0$, from the remark after (EP1), let $g \in \mathcal{A}^{BL}_1$ (with $g(0) >0$), normalized so that $g(0) =1$, be such that 
\begin{equation}\label{20230710_22:48}
\rho_1(g) \leq {\bf C_1} + \frac{\varepsilon}{2}.
\end{equation}
Assume that $\supp(\widehat{g}) \subset [-M, M]$. We structure the rest of the proof in five steps.

\medskip

\noindent {\it Step 1:~The shadow construction.} We use the exact same construction as Step 1 in the proof of Lemma \ref{Prop_20201217_09:51}.

\medskip

\noindent {\it Step 2:~The almost minorant.} In the construction of Step 1, let $L= \beta + M$. Then observe that 
\begin{align}\label{20230709_16:17}
G_L(\alpha) \leq \chi_{[-\beta\,,\,\beta]}(\alpha) + \|\widehat{g}\|_1\cdot\chi_{I_L}(\alpha).
\end{align}

\noindent {\it Step 3:~The key computation.} We now proceed with a computation that is inspired by an argument of Goldston \cite[p.~172]{G2}, see also \cite[\textsection 2]{CCChiM}. Let $m_\gamma$ denote the multiplicity of a zero $\frac{1}{2}+i\gamma$ of $\zeta(s)$. We use \eqref{inversion}, \eqref{20201216_11:09},  \eqref{20230710_22:34} and the definition of $c_0$ in \eqref{20230709_16:18}, and  \eqref{20230709_16:17} to get (recall that $g(0) =1$)
\begin{align}\label{20230709_17:08}
\int_{-\beta}^{\beta}&   F(\alpha, T) \, \d\alpha +  \|\widehat{g}\|_1 \int_{I_L}  F(\alpha, T) \, \d\alpha \geq  \int_{\mathbb{R}}  F(\alpha, T) \,G_{L}(\alpha) \,\d\alpha \nonumber \\
& = \frac{1}{N(T)} \,\sum_{0<\gamma,\gamma'\le T} \left(\frac{2L \sin\big(L (\gamma - \gamma')\log T\big)}{L (\gamma - \gamma')\log T}\right) \,  g\!\left((\gamma-\gamma')\frac{\log T}{2\pi}\right) \,w(\gamma-\gamma')   \\
&  =   \frac{2L}{N(T)}  \left\{ \sum_{0<\gamma \le T} m_\gamma  + \sum_{\substack{0<\gamma,\gamma'\le T \\ \gamma \ne \gamma'}} \left(\frac{\sin\big(L (\gamma - \gamma')\log T\big)}{L (\gamma - \gamma')\log T}\right) \, g\!\left((\gamma-\gamma')\frac{\log T}{2\pi}\right) \,w(\gamma-\gamma')\right\} \nonumber \\
& \geq \frac{2L}{N(T)}\left\{ \sum_{0<\gamma \le T} m_\gamma  + \, c_0 \sum_{\substack{0<\gamma,\gamma'\le T \\ \gamma \ne \gamma'}}  g\!\left((\gamma-\gamma')\frac{\log T}{2\pi}\right) \,w(\gamma-\gamma')\right\} \nonumber \\
& = \frac{2L}{N(T)} \left\{ (1 - c_0) \sum_{0<\gamma \le T} m_\gamma  + \, c_0 \sum_{0<\gamma,\gamma'\le T}  g\!\left((\gamma-\gamma')\frac{\log T}{2\pi}\right) \,w(\gamma-\gamma')\right\}\nonumber \\
&\geq 2L\big(1 - c_0 + c_0\,\rho_1(g)\big) + o(1)\nonumber \\
&\geq 2\beta\big(1 - c_0 + c_0\,\rho_1(g)\big) + o(1).\nonumber 
\end{align}
Note the use of the trivial bound $\displaystyle{\sum_{0<\gamma\le T} m_\gamma \ge \sum_{0<\gamma\le T} 1 = N(T)}$ in the second to last inequality.

\medskip

\noindent {\it Step 4:~Lower bound for the centered integral.} From \eqref{20230709_17:08}, we conclude that
\begin{align}\label{20230710_22:49}
\liminf_{T \to \infty} \, \frac{1}{2\beta} \int_{-\beta}^{\beta}  F(\alpha,T) \, \d\alpha  \geq \big(1 - c_0 + c_0\,\rho_1(g)\big) - \frac{1}{2\beta} \limsup_{T \to \infty}\  \|\widehat{g}\|_1 \int_{I_L}  F(\alpha, T) \, \d\alpha.
\end{align}
Recall that the total measure of the two symmetric intervals in $I_L$ is at most $4M$, and hence it depends only $g$ but not on $\beta$. In this case we know that 
\begin{equation}\label{20230709_17:10}
\limsup_{T \to \infty}\  \|\widehat{g}\|_1 \int_{I_L}  F(\alpha, T) \, \d\alpha = O_g(1).
\end{equation}
From \eqref{20230710_22:48}, \eqref{20230710_22:49}, and \eqref{20230709_17:10}, for $\beta \geq \beta_0(\varepsilon)$, we derive that
\begin{equation}\label{centered}
\liminf_{T \to \infty} \, \frac{1}{2\beta} \int_{-\beta}^{\beta}  F(\alpha,T) \, \d\alpha \geq (1 - c_0) + c_0 \,\big({\bf C_1} + \varepsilon \big).
\end{equation}

\noindent {\it Step 5:~Conclusion.} Finally, we use the inequality \eqref{centered} to derive a lower bound for the average of $F(\alpha,T)$ over an uncentered interval. 
Fix $b \geq 1$. Letting $\beta = b + \ell$ and using the fact that $F$ is even, we see that
\begin{align*}
\frac{1}{\ell} \int_b^{b+\ell}  F(\alpha,T) \, \d\alpha = \frac{1}{2\ell} \int_{b \leq |\alpha| \leq \beta} F(\alpha,T) \, \d\alpha = \frac{2\beta}{2\ell}\,\frac{1}{2\beta} \int_{-\beta}^{\beta} F(\alpha,T) \, \d\alpha - \frac{1}{2\ell} \int_{-b}^{b} F(\alpha,T) \, \d\alpha.
\end{align*}
For sufficiently large $\ell$, the last integral on the right-hand side can be made arbitrarily small. Therefore, using the estimate in \eqref{centered}, for $\ell \geq \ell_0(b, \varepsilon)$, we arrive at
\begin{align*}
\liminf_{T \to \infty} \,\frac{1}{\ell} \int_b^{b+\ell}  F(\alpha,T) \, \d\alpha \geq 1 + c_0 \, ({\bf C_1}-1) - \varepsilon.
\end{align*}
This completes the proof of the lemma.
\end{proof}

With Lemmas \ref{Prop_20201217_09:51} and \ref{Lem4_lower_bound}, one plainly arrives at Theorem \ref{main}. A numerical upper bound for the constant ${\bf C_1}$ is discussed in Section \ref{Sec_Num}.

\section{Analogues under GRH}\label{Sec_on_GRH}

\subsection{Riemann zeta-function: proof of Theorem \ref{main2_0901}} We now set up the Fourier optimization framework in order to prove Theorem \ref{main2_0901}. Recall that, working under GRH for Dirichlet $L$-functions, we have the additional number theoretic information in \eqref{20230708_10:21}. This motivates us to define the functional $\rho_1^*$, acting on functions $g \in \mathcal{A}_1$, by
\begin{align*}
\rho_1^*(g) & := \widehat{g}(0) +  \int_{-1}^1 \widehat{g}(\alpha)\,|\alpha| \, \mathrm{d}\alpha + \int_{1 \leq |\alpha| \leq \frac32} \widehat{g}(\alpha)\left(\tfrac{3}{2} - |\alpha|\right)  \mathrm{d}\alpha.
\end{align*}
Under GRH, if $g \in \mathcal{A}_1$, we observe that 
\begin{equation}\label{20230708_10:11}
 \frac{1}{N(T)} \sum_{0<\gamma,\gamma'\le T}  g\!\left((\gamma-\gamma') \frac{\log T}{2\pi} \right)  w(\gamma-\gamma')  = \int_{-\infty}^{\infty}\widehat{g}(\alpha) \, F(\alpha,T) \, \mathrm{d}\alpha   \leq  \rho_1^*(g) + o(1) \,,
\end{equation}
as $T \to \infty$. In fact, the upper bound in \eqref{20230708_10:11} follows from the fact that, for any small $\varepsilon >0$, we have
\begin{align*}
\limsup_{T \to \infty} & \int_{-\infty}^{\infty}\widehat{g}(\alpha) \, F(\alpha,T) \, \mathrm{d}\alpha \leq \limsup_{T \to \infty} \int_{-\frac32}^{\frac32}\widehat{g}(\alpha) \, F(\alpha,T) \, \mathrm{d}\alpha \\ 
& = \widehat{g}(0) +  \int_{-1}^1 \widehat{g}(\alpha)\,|\alpha| \, \mathrm{d}\alpha + \limsup_{T \to \infty} \int_{1 \leq |\alpha| \leq \frac32}\widehat{g}(\alpha) \, F(\alpha,T) \, \mathrm{d}\alpha\\
&\leq  \widehat{g}(0) +  \int_{-1}^1 \widehat{g}(\alpha)\,|\alpha| \, \mathrm{d}\alpha + \limsup_{T \to \infty} \int_{1 \leq |\alpha| \leq \frac32 - 2\varepsilon}\widehat{g}(\alpha) \, F(\alpha,T) \, \mathrm{d}\alpha\\
&\leq  \widehat{g}(0) +  \int_{-1}^1 \widehat{g}(\alpha)\,|\alpha| \, \mathrm{d}\alpha + \int_{1 \leq |\alpha| \leq \frac32 - 2\varepsilon}\widehat{g}(\alpha)\left(\tfrac{3}{2} - |\alpha| - \varepsilon\right)\mathrm{d}\alpha\,,
\end{align*}
where we have used \eqref{F formula} and \eqref{20230708_10:21}. Sending $\varepsilon \to 0$ leads to \eqref{20230708_10:11}.

\smallskip

Using the same proofs, we can now develop the complete analogue of the Fourier optimization framework of Section \ref{Sec_FOF}, just replacing \eqref{20201216_11:09} by \eqref{20230708_10:11} where applicable. This leads us to consider the following extremal problem and to establish the following lemmas. 
\subsubsection*{Extremal Problem $1^*$ {\rm (EP1$^*$)}} Find the infimum
\begin{equation}\label{c star}
{\bf C_1^*} := \inf_{\substack{{\bf 0} \neq g \in \mathcal{A}_1 \\ g(0) > 0}} \\ \frac{\rho_1^*(g)}{g(0)}.
\end{equation}

\begin{lemma} \label{Lem5_23:09}
Assume GRH, let $b \in \R$, and let $\varepsilon >0$. Then, for $\ell \geq \ell_0(\varepsilon)$, we have
\begin{align*}
	\limsup_{T \to \infty} \frac{1}{\ell} \int_b^{b + \ell} F(\alpha,T) \, \d\alpha \le {\bf C_1^*} + \varepsilon.
\end{align*}	
\end{lemma}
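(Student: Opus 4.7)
The plan is to run the proof of Lemma \ref{Prop_20201217_09:51} essentially verbatim, with the single but decisive modification that the upper bound on the diagonal sum is now provided by \eqref{20230708_10:11} rather than by \eqref{20201216_11:09}. In particular, I first verify the analogue of the remark after (EP1): the search for the infimum in (EP1$^*$) can again be restricted to $g \in \mathcal{A}_1^{BL}$. This is because, if $\varphi_\lambda$ is an approximation of the identity with $\widehat{\varphi_\lambda} \geq 0$ and $\supp(\widehat{\varphi_\lambda}) \subset [-1,1]$, then $g_\lambda := g * \varphi_\lambda \in \mathcal{A}_1^{BL}$, and $\widehat{g_\lambda}(\alpha) = \widehat{g}(\alpha)\,\widehat{\varphi_\lambda}(\alpha)$ converges to $\widehat{g}(\alpha)$ uniformly on the compact set $[-\tfrac{3}{2},\tfrac{3}{2}]$ as $\lambda \to 0$. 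Since the integrand defining $\rho_1^*$ is supported on $[-\tfrac32,\tfrac32]$ and controlled by $\widehat{g}(0)$, this yields $\rho_1^*(g_\lambda)/g_\lambda(0) \to \rho_1^*(g)/g(0)$.

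Given $\varepsilon > 0$, I would pick $g \in \mathcal{A}_1^{BL}$ with $g(0) = 1$ and $\rho_1^*(g) \leq \mathbf{C_1^*} + \varepsilon/2$. With $\supp(\widehat{g}) \subset [-M,M]$, I would then borrow Steps 1 and 2 from the proof of Lemma \ref{Prop_20201217_09:51} without change: define
\[
G_L(\alpha) := (\widehat{g} * \chi_{[-L,L]})(\alpha), \qquad L = \tfrac{\ell}{2} + M,
\]
which satisfies $G_L(\alpha) = 1$ on $[-L+M, L-M]$, vanishes outside $[-L-M, L+M]$, and is bounded by $\|\widehat{g}\|_1$ on the transition set $I_L$. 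After the shift by $b + \ell/2$, this gives the almost-majorant inequality
\[
G_L\bigl(\alpha - b - \tfrac{\ell}{2}\bigr) \geq \chi_{[b,b+\ell]}(\alpha) - \|\widehat{g}\|_1 \cdot \chi_{b + \frac{\ell}{2} + I_L}(\alpha).
\]

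For Step 3 (the key computation), I would repeat the identity
\[
\int_{\mathbb{R}} F(\alpha,T)\, G_L\bigl(\alpha - b - \tfrac{\ell}{2}\bigr)\, \d\alpha = \frac{1}{N(T)}\, \mathrm{Re}\!\!\sum_{0 < \gamma, \gamma' \leq T}\! T^{i(b + \ell/2)(\gamma-\gamma')}\, \bigl(\widecheck{\chi}_{[-L,L]} \cdot g\bigr)\!\left(\tfrac{(\gamma-\gamma')\log T}{2\pi}\right) w(\gamma-\gamma'),
\]
then apply the uniform bound $|\widecheck{\chi}_{[-L,L]}| \leq 2L$, and finally invoke the GRH estimate \eqref{20230708_10:11} to obtain
\[
\int_b^{b+\ell} F(\alpha,T)\,\d\alpha - \|\widehat{g}\|_1 \int_{b + \frac{\ell}{2} + I_L} F(\alpha,T)\,\d\alpha \leq 2L\,\rho_1^*(g) + o(1).
\]
This is precisely where Lemma \ref{Prop_20201217_09:51} used $\rho_1(g)$; the switch to $\rho_1^*$ is the only real change.

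For Step 4 (conclusion), the set $b + \frac{\ell}{2} + I_L$ has total measure $4M$, independent of $b$ and $\ell$. Hence by \eqref{F formula} (or \eqref{20230708_10:21}) the $\limsup$ over $T$ of $\int_{b + \frac{\ell}{2} + I_L} F(\alpha,T)\,\d\alpha$ is $O_g(1)$. Dividing by $\ell$, sending $T \to \infty$, and using $2L/\ell = 1 + 2M/\ell \to 1$, one obtains
\[
\limsup_{T \to \infty} \frac{1}{\ell} \int_b^{b+\ell} F(\alpha,T)\,\d\alpha \leq \rho_1^*(g) + \varepsilon/2 \leq \mathbf{C_1^*} + \varepsilon
\]
for $\ell \geq \ell_0(\varepsilon)$, as required. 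I do not anticipate a genuine obstacle: since the GRH input \eqref{20230708_10:21} feeds in cleanly through the already-established inequality \eqref{20230708_10:11}, the only point requiring any extra care is the bandlimited approximation for $\rho_1^*$ above, which is completely routine.
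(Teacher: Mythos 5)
Your proposal is correct and follows exactly the route the paper itself takes: the authors explicitly state that the GRH analogues are obtained by rerunning the proofs of Section \ref{Sec_FOF} with \eqref{20201216_11:09} replaced by \eqref{20230708_10:11}, which is precisely the substitution you make in Step 3, and the almost-majorant construction, the bound $|\widecheck{\chi}_{[-L,L]}| \le 2L$, and the $O_g(1)$ control on the transition set all carry over verbatim. One small typo in your bandlimited-approximation paragraph: the hypothesis should be $\supp(\widehat{\varphi}) \subset [-1,1]$ for the base bump $\varphi$, so that $\supp(\widehat{\varphi_\lambda}) \subset [-1/\lambda, 1/\lambda]$ expands (allowing $\widehat{\varphi_\lambda}\to 1$ uniformly on $[-\tfrac32,\tfrac32]$); as literally written, $\supp(\widehat{\varphi_\lambda})\subset[-1,1]$ would force $\widehat{\varphi_\lambda}=0$ on $1<|\alpha|\le\tfrac32$ and the stated convergence would fail there.
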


\begin{lemma}\label{Lem6_lower_bound}
Assume GRH, let $b \ge 1$, and let $\varepsilon >0$. Then, for $\ell \geq \ell_0(b,\varepsilon)$, we have
\begin{equation*}
\liminf_{T \to \infty} \, \frac{1}{\ell} \int_{b}^{b+\ell}  F(\alpha,T) \, \d\alpha \geq 1  + c_0 \,\big({\bf C_1^*} -1 \big) - \varepsilon.
\end{equation*}
\end{lemma}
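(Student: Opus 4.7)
The plan is to follow the proof of Lemma \ref{Lem4_lower_bound} essentially verbatim, with the sole conceptual change being the replacement of the RH bound \eqref{20201216_11:09} by its GRH counterpart \eqref{20230708_10:11}. Correspondingly, the functional $\rho_1$ and extremal constant ${\bf C_1}$ are replaced throughout by their GRH analogues $\rho_1^*$ and ${\bf C_1^*}$.

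First I would verify that the infimum in (EP1$^*$) may be taken over the bandlimited subclass $\mathcal{A}^{BL}_1$, by the same approximation-of-the-identity argument recorded in the Remark after (EP1): convolving with $\varphi_\lambda$ yields $g_\lambda \in \mathcal{A}^{BL}_1$ with $g_\lambda(0)\to g(0)$, and since $\widehat{g_\lambda}(\alpha)=\widehat{g}(\alpha)\,\widehat{\varphi}(\lambda\alpha)$ is dominated on the compact interval $[-3/2,3/2]$ (the only new piece relevant for $\rho_1^*$) by the continuous function $|\widehat{g}|$, dominated convergence gives $\rho_1^*(g_\lambda)\to \rho_1^*(g)$. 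Given $\varepsilon>0$, I select $g\in\mathcal{A}^{BL}_1$ with $g(0)=1$, $\supp(\widehat{g})\subset[-M,M]$, and $\rho_1^*(g)\le {\bf C_1^*}+\varepsilon/2$.

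The body of the argument then consists of Steps 1--3 of the proof of Lemma \ref{Lem4_lower_bound}, applied with $\beta=b+\ell$ and $L=\beta+M$: introduce the shadow $G_L=\widehat{g}\ast\chi_{[-L,L]}$, use the almost-minorant inequality $G_L(\alpha)\le\chi_{[-\beta,\beta]}(\alpha)+\|\widehat{g}\|_1\cdot\chi_{I_L}(\alpha)$, apply the Fourier inversion identity \eqref{inversion} together with \eqref{20230710_22:34}, and invoke the pointwise bound $\sin(x)/x\ge c_0$ along with the trivial lower bound $\sum_{0<\gamma\le T}m_\gamma\ge N(T)$. The key modification occurs precisely at the step where the RH argument bounded the full double sum $\tfrac{1}{N(T)}\sum_{\gamma,\gamma'}g(\cdots)\,w(\gamma-\gamma')$ by $\rho_1(g)+o(1)$ via \eqref{20201216_11:09}; here I would instead invoke \eqref{20230708_10:11} to bound it by $\rho_1^*(g)+o(1)$, noting that multiplying this estimate by the negative constant $c_0$ preserves the chain of inequalities in the desired direction. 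Step 4 then produces the centered estimate
\begin{equation*}
\liminf_{T\to\infty}\frac{1}{2\beta}\int_{-\beta}^{\beta}F(\alpha,T)\,\d\alpha \ \ge\ (1-c_0)+c_0\bigl({\bf C_1^*}+\varepsilon\bigr),
\end{equation*}
valid for $\beta\ge\beta_0(\varepsilon)$, after absorbing the $O_g(1)$ contribution from $I_L$ (whose total length $4M$ depends only on $g$ and not on $\beta$) into the error.

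Finally, Step 5 of Lemma \ref{Lem4_lower_bound} transfers the centered estimate to $[b,b+\ell]$ via the identity
\begin{equation*}
\frac{1}{\ell}\int_{b}^{b+\ell}F(\alpha,T)\,\d\alpha\ =\ \frac{2\beta}{2\ell}\cdot\frac{1}{2\beta}\int_{-\beta}^{\beta}F(\alpha,T)\,\d\alpha\ -\ \frac{1}{2\ell}\int_{-b}^{b}F(\alpha,T)\,\d\alpha,
\end{equation*}
where the last term is $O_b(1/\ell)$ (by the upper bound of Lemma \ref{Lem5_23:09} applied on a fixed interval) and is absorbed into $\varepsilon$ for $\ell\ge\ell_0(b,\varepsilon)$. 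I do not anticipate a genuine new obstacle: the structural skeleton of the argument is entirely robust to the enlargement of the admissible range of $\widehat{g}$ from $[-1,1]$ to $[-3/2,3/2]$, and the only role played by the extra GRH input \eqref{20230708_10:21} is to replace $\rho_1(g)$ by the smaller quantity $\rho_1^*(g)$, producing the sharper constant ${\bf C_1^*}\le {\bf C_1}$ and the correspondingly stronger lower bound $1+c_0({\bf C_1^*}-1)\ge 1+c_0({\bf C_1}-1)$ (recall $c_0<0$).
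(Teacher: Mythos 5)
Your proposal is correct and is precisely what the paper intends: the paper itself does not write out the proof of Lemma~\ref{Lem6_lower_bound}, instead stating that one repeats the arguments of Section~\ref{Sec_FOF} ``just replacing \eqref{20201216_11:09} by \eqref{20230708_10:11} where applicable.'' You reproduce the five-step structure of Lemma~\ref{Lem4_lower_bound}, correctly identify that the only substantive change is invoking \eqref{20230708_10:11} in place of \eqref{20201216_11:09} at the end of the key computation (and that this substitution interacts correctly with the factor $c_0<0$), and verify that the reduction to band\-limited test functions in the Remark after (EP1) carries over to the third term of $\rho_1^*$ by dominated convergence on the compact interval $[-3/2,3/2]$.
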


With Lemmas \ref{Lem5_23:09} and \ref{Lem6_lower_bound}, one plainly arrives at Theorem \ref{main2_0901}. A numerical upper bound for the constant ${\bf C_1^*}$ is discussed in Section \ref{Sec_Num}.

\subsection{Families of Dirichlet $L$-functions: proof of Theorem \ref{main3_0901}} We now set up the Fourier optimization framework in order to prove Theorem \ref{main3_0901}. We start by observing that, for any function $R \in L^1(\mathbb{R})$ such that $\widehat{R} \in L^1(\mathbb{R})$, from definition \eqref{20230712_14:47} and Fourier inversion, we have
\begin{align*}
\sum_{q} \frac{W(q/Q)}{\varphi(q)}  {\sumstar_{\chi \,(\text{mod }{q})}}  \sum_{\gamma_\chi, \gamma'_\chi } R\left( \frac{(\gamma_\chi - \gamma'_\chi)\log Q}{2\pi}\right)  \hP \left( i\gamma_\chi \right)  \hP \left( i\gamma'_\chi \right) = N_\Phi (Q) \int_{-\infty}^{\infty} F_{\Phi}(\alpha,Q) \, \widehat{R}(\alpha)\,\d\alpha.
\end{align*}
Chandee, Lee, Liu, and Radziwi\l\l \ \cite{CLLR} 
have evaluated $F_{\Phi} (\alpha, Q)$ when $|\alpha| < 2$. Their result reads as follows.
\begin{lemma}\label{Lem8_Dirichlet} \textup{(cf.  }\cite[Theorem 1.2]{CLLR}\textup{)}
Assume GRH for Dirichlet $L$-functions. Then, for any $\varepsilon > 0$, we have
\begin{align*}
  F_\Phi (\alpha, Q) =& \, \big(1+o(1)\big) \left(  f(\alpha) + \Phi\big(Q^{-|\alpha|} \big)^2 \log Q \left( \frac{1}{ 2 \pi }  \int_{-\infty}^\infty \left| \hP ( it ) \right|^2 \dt    \right)^{-1}  \right) \\
  & \quad + O\Big(  \Phi\big( Q^{- |\alpha|} \big)  \sqrt{  f(\alpha) \log Q } \Big)\,,
\end{align*} 
uniformly for $ |\alpha| \leq 2-\varepsilon$, as $ Q \to \infty$, where
$\displaystyle{  f(\alpha ) :=  \begin{cases}
| \alpha |, & \text{ for }  \,| \alpha |\leq 1, \\
1, & \text{ for }  \,  | \alpha | >1 .
\end{cases}
}$
\end{lemma}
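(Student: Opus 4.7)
The plan is to combine the standard explicit-formula analysis of the second moment of the zero-sum $\sum_{\gamma_\chi} \hP(i\gamma_\chi) Q^{i\alpha \gamma_\chi}$ with the orthogonality of primitive Dirichlet characters and the additional smooth average over the modulus $q$. First, for each primitive character $\chi$ mod $q$, I would apply a truncated explicit formula to rewrite the inner sum as (essentially) $-\sum_n \Lambda(n)\chi(n)\Phi(nQ^{-|\alpha|})$ plus lower-order terms arising from the Gamma factors in the functional equation of $L(s,\chi)$; the hypothesis $\hP(it) \ll |t|^{-2}$ ensures that the truncation is harmless. The normalization constant $\bigl(\tfrac{1}{2\pi}\int_{-\infty}^\infty |\hP(it)|^2 \, \dt\bigr)^{-1}$ appearing in the statement is the reciprocal of the $L^2$-norm of $\hP$ on the critical line, which will naturally enter after squaring and normalizing by $N_\Phi(Q)$.

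Next, I would square the resulting identity and average first over primitive characters $\chi$ using the orthogonality relation (which, up to a Möbius correction, detects pairs $(n_1, n_2)$ with $n_1 \equiv n_2 \pmod{q}$ and $(n_1 n_2, q)=1$) and then over $q$ with the smooth weight $W(q/Q)/\varphi(q)$. The diagonal contribution $n_1 = n_2$ yields $\sum_n \Lambda(n)^2 \Phi(nQ^{-|\alpha|})^2$, which by the prime number theorem evaluates to $(1+o(1))\Phi(Q^{-|\alpha|})^2 \log Q$ after the $q$-average is normalized. This accounts for the second main-term piece in the stated asymptotic.

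The off-diagonal contribution consists of pairs $n_1 \neq n_2$ with $n_1 \equiv n_2 \pmod q$. Since $\Phi$ is compactly supported in $(a,b)$, both $n_j$ have size of order $Q^{|\alpha|}$, while $q \sim Q$. When $|\alpha| < 1$, we have $|n_1 - n_2| < Q \sim q$, which forces $n_1 = n_2$; hence the off-diagonal is empty and the term $f(\alpha) = |\alpha|$ arises purely from the lower-order explicit-formula contributions, exactly as in Montgomery's original computation of $F(\alpha, T)$. For $1 \leq |\alpha| < 2$, we write $n_1 - n_2 = qh$ with $h \neq 0$ and $|h| \ll Q^{|\alpha|-1}$, and perform Poisson summation in $q$ exploiting the smoothness of $W$. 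The resulting dual sums, together with the analysis of pairs of primes in arithmetic progressions on average, yield the constant $f(\alpha) = 1$ in this range. The error $O(\Phi(Q^{-|\alpha|})\sqrt{f(\alpha)\log Q})$ is then extracted by Cauchy--Schwarz between the diagonal and the ``cross'' contributions.

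The main obstacle is the off-diagonal analysis for $1 \leq |\alpha| < 2$. The smooth average over $q$ is what makes this range accessible: without it, one is stuck at the barrier $|\alpha| < 1$, since for a single modulus the congruence $n_1 \equiv n_2 \pmod q$ with $n_j \sim Q^{|\alpha|} > q$ carries essentially no structural information. Smoothing over $q$ converts the congruence into an averaged Poisson-dual problem, and square-root cancellation in the resulting oscillatory sums delivers the claimed estimate. The cutoff $|\alpha| < 2$ reflects the natural limit of this method: as $|\alpha| \to 2^-$, the dual variables grow out of the range of effective cancellation, and further improvement would require significantly deeper input such as bounds for Kloosterman-type sums or spectral theory of automorphic forms.
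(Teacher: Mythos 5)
The first thing to say is that the paper does not prove this lemma at all: it is quoted directly from Chandee--Lee--Liu--Radziwi\l\l\ \cite[Theorem 1.2]{CLLR}, so there is no in-paper argument to compare against, and your outline must be judged as a reconstruction of that proof. Its global architecture is right (explicit formula for each primitive $\chi$, orthogonality of characters, diagonal versus off-diagonal, with the smooth $q$-average unlocking the range $1\le|\alpha|<2$), but there is a genuine conceptual error in the identification of the two main terms. The diagonal $n_1=n_2$ does \emph{not} produce the spike $\Phi\big(Q^{-|\alpha|}\big)^2\log Q\,\big(\tfrac{1}{2\pi}\int_{-\infty}^{\infty}|\hP(it)|^2\,\mathrm{d}t\big)^{-1}$: by the prime number theorem, $\sum_n\Lambda(n)^2\Phi\big(nQ^{-|\alpha|}\big)^2\sim|\alpha|(\log Q)\,Q^{|\alpha|}\int_0^\infty\Phi(u)^2\,\mathrm{d}u$, which is supported on all of $|\alpha|\le 2-\varepsilon$ and, after the correct normalization by $N_\Phi(Q)$, is exactly what yields $f(\alpha)=|\alpha|$ for $|\alpha|\le 1$ --- just as in Montgomery's original computation, where the $|\alpha|$ comes from the diagonal of the Dirichlet polynomial over prime powers. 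The term $\Phi\big(Q^{-|\alpha|}\big)^2\log Q(\cdots)^{-1}$, which lives only on $|\alpha|\ll 1/\log Q$ and is the analogue of Montgomery's $T^{-2|\alpha|}\log T$, comes instead from the non-prime part of the explicit formula (the Gamma-factor/polar contribution), i.e., precisely from what you call the ``lower-order explicit-formula contributions.'' You have the two sources swapped.

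Second, the heart of the matter --- the off-diagonal pairs $n_1\equiv n_2\ (\mathrm{mod}\ q)$ with $n_1\ne n_2$ in the range $1\le|\alpha|<2$, which must contribute a main term $1-|\alpha|$ so as to cut the diagonal's $|\alpha|$ down to $f(\alpha)=1$ --- is not something that follows from ``Poisson summation in $q$ plus square-root cancellation.'' In \cite{CLLR} this step is exactly where the asymptotic large sieve of Conrey, Iwaniec, and Soundararajan enters, a substantial piece of machinery whose proof requires a delicate treatment of the resulting dual (Kloosterman-type) sums; asserting square-root cancellation at that point is asserting the conclusion rather than proving it. You correctly diagnose that the average over $q$ is what breaks the $|\alpha|=1$ barrier and that $|\alpha|=2$ is the natural limit of the method, but as written the argument for $1\le|\alpha|<2$ is a statement of the goal, not a proof, and the derivation of the error term $O\big(\Phi(Q^{-|\alpha|})\sqrt{f(\alpha)\log Q}\big)$ by Cauchy--Schwarz between the polar term and the prime sums (not ``between the diagonal and the cross contributions'' in your sense) also needs to be re-anchored once the main terms are correctly attributed.
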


By Plancherel’s theorem for the Mellin transform, the term
\begin{equation*}
\Phi\big(Q^{-|\alpha|} \big)^2 \log Q \left( \frac{1}{ 2 \pi }  \int_{-\infty}^\infty \left| \hP ( it ) \right|^2 \dt    \right)^{-1} 
\end{equation*}
behaves like a Dirac delta at the origin (see the argument in \cite[pp.~82--83]{CLLR}). Now let $\mathcal{A}_\Delta$ be the class of continuous, even, and non-negative functions $g \in L^1(\R)$ such that $\widehat{g}(\alpha) \leq 0$ for $|\alpha| \geq \Delta$. For $1 \leq \Delta \leq 2$, Lemma \ref{Lem8_Dirichlet} above motivates the definition of the functional $\rho_{\Delta}$, acting on functions $g \in \mathcal{A}_{\Delta}$, by
\begin{equation*}
\rho_{\Delta}(g) := \widehat{g}(0) +  \int_{-1}^1 \widehat{g}(\alpha)\,|\alpha| \, \mathrm{d}\alpha + \int_{1 \leq |\alpha|\leq \Delta} \widehat{g}(\alpha)\,\d\alpha.
\end{equation*}

Under GRH, for $1 \leq \Delta <2$ and if $g \in \mathcal{A}_\Delta$, we observe that Lemma \ref{Lem8_Dirichlet} plainly yields
\begin{align}\label{20230712_15:57}
\begin{split}
\frac{1}{N_\Phi (Q)}\sum_{q} \frac{W(q/Q)}{\varphi(q)}  {\sumstar_{\chi \,(\text{mod }{q})}}  & \sum_{\gamma_\chi, \gamma'_\chi } g\left( \frac{(\gamma_\chi - \gamma'_\chi)\log Q}{2\pi}\right)   \hP \left( i\gamma_\chi \right)  \hP \left( i\gamma'_\chi \right) =  \int_{-\infty}^{\infty} F_{\Phi}(\alpha,Q) \, \widehat{g}(\alpha)\,\d\alpha\\
& \leq \int_{-\Delta}^{\Delta} F_{\Phi}(\alpha,Q) \, \widehat{g}(\alpha)\,\d\alpha  = \rho_\Delta(g) + o(1)\,,
\end{split}
\end{align}
as $Q \to \infty$. This leads us to consider the following limiting extremal problem (for $\Delta =2$).
\subsubsection*{Extremal Problem 2 {\rm (EP2)}} Find the infimum
\begin{equation} \label{c 2}
{\bf C_2} := \inf_{\substack{{\bf 0} \neq g \in \mathcal{A}_2 \\ g(0) > 0}} \\ \frac{\rho_2(g)}{g(0)}.
\end{equation}
We now claim that we can develop the complete analogue of the Fourier optimization framework of Section \ref{Sec_FOF} and arrive at the following lemmas.

\begin{lemma} \label{Lem9_23:09}
Assume GRH, let $b \in \R$ and $\varepsilon >0$. Then, for $\ell \geq \ell_0(\varepsilon)$, we have
\begin{align*}
	\limsup_{Q \to \infty} \frac{1}{\ell} \int_b^{b + \ell} F_{\Phi}(\alpha,Q) \, \d\alpha \le {\bf C_2} + \varepsilon.
\end{align*}	
\end{lemma}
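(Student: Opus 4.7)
The plan is to mimic the four-step proof of Lemma~\ref{Prop_20201217_09:51} essentially verbatim, with $F(\alpha,T)$ replaced by $F_{\Phi}(\alpha,Q)$, the twist $T^{i(b+\ell/2)(\gamma-\gamma')}$ by $Q^{i(b+\ell/2)(\gamma_\chi-\gamma'_\chi)}$, and the bound \eqref{20201216_11:09} by its $q$-aspect analogue \eqref{20230712_15:57}. First, I would establish the analogue of the remark after (EP1): convolution with a Schwartz approximate identity whose Fourier transform is supported in a small neighborhood of the origin shows that the infimum $\mathbf{C_2}$ in (EP2) may be taken over the subclass $\mathcal{A}_2^{BL}$ of bandlimited functions. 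Given $\varepsilon>0$, fix such a $g$ normalized so that $g(0)=1$, with $\supp(\widehat{g}) \subset [-M,M]$ and $\rho_2(g) \leq \mathbf{C_2} + \varepsilon/2$.

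For Steps 1--2, I define the shadow $G_L(\alpha) := (\widehat{g} \ast \chi_{[-L,L]})(\alpha)$ and take $L = \ell/2 + M$; exactly as before, $G_L$ equals $g(0)=1$ on $[-L+M, L-M]$, vanishes outside $[-L-M, L+M]$, and is bounded by $\|\widehat{g}\|_1$ on the transition set $I_L$, and translation by $b + \ell/2$ yields the almost majorant
\begin{equation*}
G_L\bigl(\alpha - b - \tfrac{\ell}{2}\bigr) \geq \chi_{[b, b+\ell]}(\alpha) - \|\widehat{g}\|_1 \cdot \chi_{b + \ell/2 + I_L}(\alpha).
\end{equation*}
For the key computation (Step 3), I would integrate $F_{\Phi}(\alpha,Q)$ (non-negative by \eqref{20230712_14:47}) against this inequality. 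Fourier inversion, together with the assumption $\hP(it) \geq 0$ that guarantees the inner weights $\hP(i\gamma_\chi)\hP(i\gamma'_\chi)$ are non-negative, recasts $\int F_{\Phi}(\alpha,Q)\, G_L(\alpha - b - \tfrac{\ell}{2})\,\d\alpha$ as the real part of a triple sum against $Q^{i(b+\ell/2)(\gamma_\chi - \gamma'_\chi)}(\widecheck{\chi}_{[-L,L]} \cdot g)\bigl((\gamma_\chi - \gamma'_\chi)\tfrac{\log Q}{2\pi}\bigr)$. Using the pointwise bound $|\widecheck{\chi}_{[-L,L]}(x)| \leq 2L$ and \eqref{20230712_15:57}, I would conclude
\begin{equation*}
\int_b^{b+\ell} F_{\Phi}(\alpha,Q)\,\d\alpha - \|\widehat{g}\|_1 \int_{b + \ell/2 + I_L} F_{\Phi}(\alpha,Q)\,\d\alpha \leq 2L\,\rho_2(g) + o(1).
\end{equation*}
For Step 4, the set $b + \ell/2 + I_L$ has fixed measure $4M$ depending only on $g$, so the subtracted integral is $O_g(1)$ (dominate $\chi_{I_L}$ by a fixed Schwartz function and apply \eqref{20230712_15:57}). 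Dividing by $\ell$, sending $Q \to \infty$ and then $\ell \to \infty$ yields the desired bound $\mathbf{C_2} + \varepsilon$.

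The main technical wrinkle is the gap between \eqref{20230712_15:57}, which is stated for $g \in \mathcal{A}_\Delta$ with $\Delta < 2$, and (EP2), which uses $\Delta = 2$. I would bridge this by observing that for any $g \in \mathcal{A}_2^{BL}$ the contribution to $\int F_{\Phi}(\alpha,Q)\,\widehat{g}(\alpha)\,\d\alpha$ from $|\alpha| \geq 2$ is non-positive (since $F_\Phi \geq 0$ and $\widehat{g} \leq 0$ there) and so may be dropped; applying Lemma~\ref{Lem8_Dirichlet} on $|\alpha| \leq 2 - \varepsilon'$, controlling the sliver $2 - \varepsilon' \leq |\alpha| \leq 2$ by continuity of $\widehat{g}$, and sending $\varepsilon' \to 0$ delivers the $\Delta = 2$ version of \eqref{20230712_15:57}. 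This is the only non-cosmetic adaptation required; all other steps transcribe directly from the proof of Lemma~\ref{Prop_20201217_09:51}.
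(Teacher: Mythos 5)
Your overall strategy --- running the four-step shadow argument of Lemma~\ref{Prop_20201217_09:51} with $F_\Phi(\alpha,Q)$ in place of $F(\alpha,T)$, the twist $Q^{i(b+\ell/2)(\gamma_\chi-\gamma'_\chi)}$ in place of $T^{i(b+\ell/2)(\gamma-\gamma')}$, and \eqref{20230712_15:57} where \eqref{20201216_11:09} was used --- is exactly what the paper does, and you correctly isolate the genuine wrinkle: \eqref{20230712_15:57} is only available for $g\in\mathcal{A}_\Delta$ with $\Delta<2$, whereas an {\rm(EP2)}-near-extremizer naturally lives in $\mathcal{A}_2$.

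Your proposed bridge, however, has a gap. Continuity of $\widehat{g}$ does give $\widehat{g}(\pm 2)\le 0$ and that $\widehat{g}$ is close to its boundary value on the sliver $2-\varepsilon'\le|\alpha|\le 2$, but by itself this does not control $\int_{2-\varepsilon'\le|\alpha|\le 2}F_\Phi(\alpha,Q)\,\widehat{g}(\alpha)\,\d\alpha$. If $\widehat{g}(2)=0$ --- something you cannot rule out for a near-extremizer of {\rm(EP2)} --- then $\widehat{g}$ may be strictly positive on $(2-\varepsilon',2)$, and to make the sliver contribution $o(1)$ you also need a $Q$-uniform bound on $\int_{2-\varepsilon'}^{2}F_\Phi(\alpha,Q)\,\d\alpha$. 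Lemma~\ref{Lem8_Dirichlet} says nothing about $F_\Phi$ in that range, and crude bounds on $F_\Phi$ grow with $Q$, so continuity alone does not close the argument; you would need an auxiliary coarse estimate (for instance the shadow argument run with some fixed $h\in\mathcal{A}_\Delta^{BL}$, $\Delta<2$) showing that $\int_I F_\Phi(\alpha,Q)\,\d\alpha$ stays bounded on fixed bounded intervals $I$. The paper sidesteps this entirely with a dilation: pick $g\in\mathcal{A}_2^{BL}$ with $g(0)=1$ and $\rho_2(g)\le\mathbf{C_2}+\varepsilon/4$, set $g_\Delta(x):=g(\Delta x/2)$ so that $\widehat{g_\Delta}(\alpha)=(2/\Delta)\,\widehat{g}(2\alpha/\Delta)$ and $g_\Delta\in\mathcal{A}_\Delta^{BL}$, observe that $\rho_\Delta(g_\Delta)\to\rho_2(g)$ as $\Delta\to 2^{-}$ so that $\rho_\Delta(g_\Delta)\le\mathbf{C_2}+\varepsilon/2$ for $\Delta$ close enough to $2$, and then run your Steps 1--4 verbatim with $g_\Delta$ in place of $g$; since $\Delta<2$, inequality \eqref{20230712_15:57} applies directly and there is no boundary sliver to worry about.
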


\begin{lemma}\label{Lem10_lower_bound}
Assume GRH, let $b\ge 1$, and let $\varepsilon >0$. Then, for $\ell \geq \ell_0(b,\varepsilon)$, we have
\begin{equation*}
\liminf_{Q \to \infty} \, \frac{1}{\ell} \int_{b}^{b+\ell}  F_{\Phi}(\alpha,Q) \, \d\alpha \geq 1 + c_0 \,\big({\bf C_2}-1)- \varepsilon.
\end{equation*}
\end{lemma}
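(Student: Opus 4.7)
The plan is to mirror the five-step proof of Lemma \ref{Lem4_lower_bound}, with three substitutions: (i) the class $\mathcal{A}_1$ and extremal problem (EP1) are replaced by $\mathcal{A}_2$ and (EP2); (ii) the identity \eqref{20201216_11:09} is replaced by its Dirichlet analogue \eqref{20230712_15:57}; and (iii) the trivial bound $\sum_{0<\gamma\le T} m_\gamma \ge N(T)$ is replaced by
\[
\sum_q \frac{W(q/Q)}{\varphi(q)} \sumstar_{\chi \,(\text{mod }{q})} \sum_{\gamma_\chi} m_{\gamma_\chi} \, |\hP(i\gamma_\chi)|^2 \ \geq \ N_\Phi(Q),
\]
which follows from $m_{\gamma_\chi} \ge 1$ and the definition of $N_\Phi(Q)$. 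Since $F_\Phi \ge 0$, we may assume $1 + c_0({\bf C_2} - 1) - \varepsilon > 0$. By an approximation-by-convolution argument identical to the remark after (EP1), we restrict to a bandlimited near-extremizer $g \in \mathcal{A}_2$ with $g(0) = 1$, $\rho_2(g) \le {\bf C_2} + \varepsilon/2$, and $\supp(\widehat{g}) \subset [-M, M]$. Setting $\beta = b + \ell$ and $L = \beta + M$, we construct the shadow $G_L = \widehat{g} * \chi_{[-L, L]}$ exactly as in the zeta case, obtaining the almost-minorant $G_L(\alpha) \le \chi_{[-\beta,\beta]}(\alpha) + \|\widehat{g}\|_1 \chi_{I_L}(\alpha)$, where $|I_L| = 4M$ depends only on $g$.

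The heart of the argument is Step 3. Writing $G_L = \widehat{h}$ with $h(x) := g(x)\,\widecheck{\chi}_{[-L,L]}(x)$ (so that $h(0) = 2L$), applying \eqref{20230712_15:57} to the weighted double sum produced by $h$, and separating the diagonal $\gamma_\chi = \gamma'_\chi$ (which contributes at least $2L \cdot N_\Phi(Q)$ by the trivial bound above) from the off-diagonal (which is bounded below by $c_0$ times the off-diagonal $g$-sum, via $\sin(x)/x \ge c_0$), one regroups and uses \eqref{20230712_15:57} as an \emph{upper} bound on the full $g$-sum. Since $c_0 < 0$ flips the inequality in the desired direction, this yields
\[
\int_{\mathbb{R}} F_\Phi(\alpha, Q)\, G_L(\alpha) \, \d\alpha \ \ge \ 2L\bigl(1 - c_0 + c_0\,\rho_2(g)\bigr) + o(1) \ \ge \ 2\beta\bigl(1 - c_0 + c_0\,\rho_2(g)\bigr) + o(1),
\]
where the last inequality uses $L \ge \beta$ and the positivity of the bracket granted by our standing assumption.

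Combining this with the almost-minorant inequality and dividing by $2\beta$, Step 4 reduces to showing $\|\widehat{g}\|_1 \int_{I_L} F_\Phi(\alpha, Q) \, \d\alpha = O_g(1)$, which follows by covering $I_L$ (of fixed total measure $4M$) with finitely many intervals of length $\ell_0(\varepsilon)$ and invoking the companion upper bound Lemma \ref{Lem9_23:09}. This delivers, for $\beta \ge \beta_0(\varepsilon)$, the centered estimate
\[
\liminf_{Q \to \infty} \, \frac{1}{2\beta} \int_{-\beta}^{\beta} F_\Phi(\alpha, Q) \, \d\alpha \ \geq \ (1 - c_0) + c_0\bigl({\bf C_2} + \varepsilon\bigr),
\]
and Step 5 transfers this to the uncentered interval exactly as in Lemma \ref{Lem4_lower_bound}, using the evenness of $F_\Phi$ (immediate from \eqref{20230712_14:47}) and absorbing $\frac{1}{2\ell}\int_{-b}^{b} F_\Phi \, \d\alpha = O_b(1/\ell)$ into $\varepsilon$ for $\ell \ge \ell_0(b,\varepsilon)$. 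The only conceptual subtlety worth flagging is that $I_L$ sits near $\pm L$, outside the asymptotic range $|\alpha| \le 2-\varepsilon$ of Lemma \ref{Lem8_Dirichlet}; this is precisely why the companion upper bound Lemma \ref{Lem9_23:09}, which is uniform in the interval's location, is essential.
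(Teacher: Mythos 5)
Your five-step outline correctly mirrors the paper's Lemma \ref{Lem4_lower_bound}, and most of your substitutions are exactly the ones the paper makes. But you have missed the one genuinely new ingredient that the paper explicitly flags as its ``minor adjustment'': the $\Delta$-rescaling of the test function. You take a bandlimited near-extremizer $g \in \mathcal{A}_2$ with $\rho_2(g) \le {\bf C_2} + \varepsilon/2$ and then apply \eqref{20230712_15:57} directly to $g$ in Step~3. But \eqref{20230712_15:57} is only stated, and only valid, for $g \in \mathcal{A}_\Delta$ with $1 \le \Delta < 2$: the chain
\[
\int_{-\infty}^{\infty} F_{\Phi}(\alpha,Q)\,\widehat{g}(\alpha)\,\d\alpha \ \le\ \int_{-\Delta}^{\Delta} F_{\Phi}(\alpha,Q)\,\widehat{g}(\alpha)\,\d\alpha \ =\ \rho_\Delta(g) + o(1)
\]
uses both the sign condition $\widehat{g}\le 0$ on $|\alpha|\ge \Delta$ and the asymptotic of Lemma \ref{Lem8_Dirichlet}, which holds only uniformly for $|\alpha|\le 2-\varepsilon$. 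For $g\in\mathcal{A}_2$ you only know $\widehat{g}\le 0$ on $|\alpha|\ge 2$, so after discarding the tail you are stuck with $\int_{-2}^{2}F_\Phi\,\widehat{g}$, and the slice $2-\varepsilon<|\alpha|\le 2$ is exactly where $\widehat{g}$ has no sign and $F_\Phi$ has no asymptotic control; you cannot conclude this slice contributes $o(1)$, since one only controls averages of $F_\Phi$ over intervals of length $\gg 1$, not over shrinking intervals.

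The paper resolves this by first choosing $g\in\mathcal{A}_2^{BL}$ with $g(0)=1$ and $\rho_2(g)\le {\bf C_2}+\varepsilon/4$, then rescaling: for $\Delta<2$ set $g_\Delta(x):=g(\Delta x/2)$, so $\widehat{g_\Delta}(\alpha)=(2/\Delta)\,\widehat{g}(2\alpha/\Delta)$ and $g_\Delta\in\mathcal{A}_\Delta^{BL}$. Since $\rho_\Delta(g_\Delta)\to\rho_2(g)$ as $\Delta\to 2^-$ by continuity, one can fix $\Delta<2$ with $\rho_\Delta(g_\Delta)\le {\bf C_2}+\varepsilon/2$, and then run your Steps 1--5 with $g_\Delta$ in place of $g$ (and $\rho_\Delta$ in place of $\rho_2$), now legitimately invoking \eqref{20230712_15:57}. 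With this fix, everything else in your argument is correct: the identification of the diagonal-term bound via $m_{\gamma_\chi}\ge 1$ and the definition of $N_\Phi(Q)$, the use of $\sin x/x\ge c_0$ on the off-diagonal, the evenness of $F_\Phi$, and the $O_g(1)$ treatment of the $I_L$ contribution. You correctly noted that the $I_L$ integral sits outside the range of Lemma \ref{Lem8_Dirichlet}, but that is the easy boundary issue; the one that actually requires a new idea is the one inside the range, at $|\alpha|$ near $2$, for the test function itself.
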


We only need a minor adjustment to the proofs. Letting $\mathcal{A}_\Delta^{BL} \subset \mathcal{A}_\Delta$ be the subclass of bandlimited functions, we have already noted (see remark after problem (EP1)) that, given $\varepsilon >0$, it is possible to choose $ g \in \mathcal{A}_2^{BL}$ with $g(0)=1$ such that 
$$\rho_2(g) \leq {\bf C_2} + \frac{\varepsilon}{4}.$$
For $\Delta <2$, let $g_\Delta(x) := g(\Delta x/2)$. Then $\widehat{g_{\Delta}}(\alpha) = (2/\Delta)\,\widehat{g}(2\alpha / \Delta)$, and we see that $g_{\Delta} \in \mathcal{A}_{\Delta}^{BL}$. For $\Delta$ sufficiently close to $2$, we have 
\begin{equation}\label{20230712_16:57}
\rho_\Delta(g_{\Delta}) \leq \rho_2(g)+ \frac{\varepsilon}{4} \leq  {\bf C_2} + \frac{\varepsilon}{2}.
\end{equation}
We then start as in the proofs of Lemmas \ref{Prop_20201217_09:51} and \ref{Lem4_lower_bound}, using this test function $g_{\Delta}$ and \eqref{20230712_16:57} in place of \eqref{20230710_17:29} and \eqref{20230710_22:48}. The rest of the outline is the same, replacing \eqref{20201216_11:09} by \eqref{20230712_15:57} at the end of the key computation in Step 3.

\medskip

With Lemmas \ref{Lem9_23:09} and \ref{Lem10_lower_bound}, one plainly arrives at Theorem \ref{main3_0901}. A numerical upper bound for the constant ${\bf C_2}$ is discussed in Section \ref{Sec_Num}.

\section{Numerical approximations} \label{Sec_Num}

In the previous sections, we have seen examples of how one can set up Fourier optimization problems that are related to number theoretical situations. Other examples in the literature include \cite{CCChiM, CMRQE, CMS, CQE, CE, Ga, QE}. Quite often, it is a hard task to find the exact solutions and extremal functions for such Fourier optimization problems, and one is naturally led to the search of near-extremizers via computational methods. 

\medskip

For instance, if we restricted ourselves in (EP1) to the subclass of functions $g \in \mathcal{A}_1$ with $\supp(\widehat{g}) \subset [-1,1]$, the answer of the problem would be the classical constant $${\bf C_{MT}} := \frac{1}{2} + 2^{-\frac12} \cot\big(2^{-\frac12}\big) = 1.32749\ldots$$ 
found by Montgomery and Taylor \cite{M2} (for alternative proofs, see \cite[Corollary 14]{CCLM} or \cite[Appendix A]{ILS}). Thanks to our Fourier optimization setup, this would already be an improvement over the current best bound appearing on the right-hand side of \eqref{20201215_00:21}. Having this classical constant ${\bf C_{MT}} $ as our initial benchmark, one of the points we want to make is that the extended class $\mathcal{A}_1$ allows one to go further.

\medskip

It turns out that our extremal problems (EP1), (EP$1^*$), and (EP2) are connected to the thorough study of Chirre, Gon\c{c}alves, and de Laat \cite{CGL} on simple zeros of $\zeta(s)$ and families of $L$-functions via a pair correlation approach. In this work, similar extremal problems received a robust numerical treatment via modern semidefinite programming tools, and we shall take advantage of certain upper bounds established therein; see Lemma \ref{Lem11_CG} below. Since the notation used in \cite{CGL} is slightly different than ours, let us briefly review their setup. Chirre, Gon\c{c}alves, and de Laat in \cite{CGL} work within the linear programming class $\mathcal{A}_{LP}$, introduced by Cohn and Elkies \cite{CE} for the sphere packing bounds. Here, $\mathcal{A}_{LP}$ is the class of even and continuous functions $f \in L^1(\R)$, satisfying the following conditions:
\begin{itemize}
\item[(i)] $\widehat{f}(0) = f(0) =1$;
\item[(ii)] $\widehat{f} \geq 0$;
\item[(iii)] $f$ is eventually non-positive.
\end{itemize}
By eventually non-positive one means that $f(x) \leq 0$ for all sufficiently large $|x|$. One defines the last sign change of $f$ by 
$$r(f) = \inf\big\{ r>0 \ : \ f(x) \leq 0 \ {\rm for} \ |x|\geq r\big\}.$$
In \cite{CGL}, the authors are interested in minimizing the following three functionals over the class $\mathcal{A}_{LP}$:
\begin{align*}
\mc{Z}(f) &:= r(f) + \frac{2}{r(f)} \int_0^{r(f)} f(x) \, x\,\dx\,;\\
\widetilde{\mc{Z}}(f) &:=  r(f) + \frac{2}{r(f)} \int_0^{r(f)} f(x) \, x\,\dx + 3\int_{r(f)}^{\frac{3}{2}r(f)} f(x)\,\dx - \frac{2}{r(f)} \int_{r(f)}^{\frac{3}{2}r(f)} f(x)\,x\,\dx\,;\\
\mc{L}(f) &:= \frac{r(f)}{2} + \frac{4}{r(f)}\int_0^{\frac{r(f)}{2}} f(x)\,x\,\dx + 2\int_{\frac{r(f)}{2}}^{r(f)} f(x)\,\dx.
\end{align*}
They prove the following result.

\begin{lemma}[cf. \cite{CGL}]\label{Lem11_CG} The following upper bounds hold:
\begin{align*}
\inf_{f \in \mc{A}_{LP}} \mc{Z}(f) &< 1.3208\,;\\
\inf_{f \in \mc{A}_{LP}} \widetilde{\mc{Z}}(f) &< 1.3155\,;\\
\inf_{f \in \mc{A}_{LP}} \mc{L}(f) &< 1.0650.
\end{align*}
\end{lemma}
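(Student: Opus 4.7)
Since all three bounds assert only the existence of test functions in $\mathcal{A}_{LP}$ attaining the claimed values, the plan is purely constructive: exhibit explicit feasible $f_1,f_2,f_3 \in \mathcal{A}_{LP}$ witnessing the three inequalities, and verify that each lies in the class and attains the relevant functional value below $1.3208$, $1.3155$, and $1.0650$ respectively. I would follow the semidefinite programming (SDP) approach of Chirre--Gon\c{c}alves--de Laat in \cite{CGL}, which is the standard way to attack Cohn--Elkies style problems numerically.

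The ansatz is to look for $f$ of the Hermite form $f(x)=p(x^{2})\,e^{-\pi x^{2}}$ where $p$ is an even polynomial of some fixed degree $2d$. This is automatically even, continuous, and in $L^{1}(\R)$, and its Fourier transform is again of the same shape, $\widehat{f}(t)=q(t^{2})\,e^{-\pi t^{2}}$, where $q$ depends linearly on the coefficients of $p$ (via explicit moments of Hermite polynomials). The normalization $f(0)=\widehat{f}(0)=1$ is two linear equations. The positivity condition $\widehat{f}\ge 0$ is equivalent to $q(u)\ge 0$ on $[0,\infty)$, a univariate positivity condition; by Markov--Luk\'acs / Putinar, this is representable as a finite-dimensional PSD constraint on the coefficients of $p$. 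Likewise, fixing a candidate last sign change $r>0$ and imposing $p(u)\le 0$ for $u\ge r^{2}$ becomes a PSD constraint through a sum-of-squares decomposition $-p(u)=\sigma_{0}(u)+(u-r^{2})\sigma_{1}(u)$ with $\sigma_{0},\sigma_{1}$ SOS.

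Once these constraints are encoded, the three functionals are affine in the coefficients of $p$: indeed, $\mathcal{Z}(f)$, $\widetilde{\mathcal{Z}}(f)$, and $\mathcal{L}(f)$ are linear combinations of integrals of the form $\int_{a}^{b} x^{2k+1}\,e^{-\pi x^{2}}\,\dx$ on the intervals $[0,r]$, $[r,\tfrac{3}{2}r]$, $[0,\tfrac{r}{2}]$, $[\tfrac{r}{2},r]$ determined by the candidate last sign change $r$; these are Gaussian incomplete moments, computable in closed form in terms of $\mathrm{erf}$. One then sweeps over candidate values of $r$ (say in a fine grid) and, for each, solves the resulting SDP minimizing the relevant functional. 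Running the optimization in high precision (e.g.\ SDPA-GMP) for increasing degrees $d$ provides a decreasing sequence of feasible values; for each target bound, one selects a degree $d$ and value $r$ for which the resulting SDP solution is strictly below $1.3208$, $1.3155$, and $1.0650$ respectively.

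The main obstacle is not finding the numerical solutions — that is routine — but promoting them to a rigorous feasibility certificate. The SDP output for the polynomial coefficients is floating-point, and one must exhibit explicit rational (or algebraic) near-optima and then prove, e.g.\ by interval arithmetic or symbolic verification of the Cholesky/SOS decompositions, that the constraints $\widehat{f}\ge 0$, $f\le 0$ on $[r,\infty)$, and the normalization all hold exactly. In practice, one perturbs the numerical optimum slightly into the interior of the feasible region so that rounding the Gram matrices to nearby rationals preserves positive-definiteness, and then computes the functional value on this certified rational point; the slack built into the perturbation is chosen small enough that the value stays below the target threshold. This is precisely the strategy carried out in \cite{CGL}, so I would cite their certified computations verbatim: the three bounds above are read off from the three explicit feasible points they construct (optimizing, respectively, $\mathcal{Z}$, $\widetilde{\mathcal{Z}}$, and $\mathcal{L}$), and no new analytic work is required here beyond recording these values.
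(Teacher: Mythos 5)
Your proposal matches the paper's approach: the paper gives no independent proof of this lemma and simply imports the three numerical bounds directly from \cite{CGL}, which is exactly what you conclude with. Your description of the underlying semidefinite-programming methodology (Hermite-form ansatz $p(x^2)e^{-\pi x^2}$, sum-of-squares encoding of the sign constraints, sweeping over the last sign change $r$, and rational rounding of the Gram matrices into a rigorous feasibility certificate) is an accurate account of how \cite{CGL} obtained and certified these constants, but it is context rather than a new argument, and the paper itself stops at the citation.
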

Our problems problems (EP1), (EP$1^*$), and (EP2) are related to these as follows. 
\begin{proposition} \label{Prop12}We have
\begin{align}
{\bf C_1} &\leq \inf_{f \in \mc{A}_{LP}} \mc{Z}(f)\,; \label{20230716_17:00}\\
{\bf C_1^*} & \leq \inf_{f \in \mc{A}_{LP}} \widetilde{\mc{Z}}(f) \,; \label{20230716_17:01}\\
{\bf C_2} &\leq \inf_{f \in \mc{A}_{LP}} \mc{L}(f). \label{20230716_17:02}
\end{align}
\end{proposition}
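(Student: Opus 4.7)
The strategy is to exploit a Fourier duality between the Cohn--Elkies class $\mc{A}_{LP}$ and our classes $\mc{A}_\Delta$: given $f \in \mc{A}_{LP}$, I will produce, via a rescaling of the Fourier transform, a function $g \in \mc{A}_\Delta$ with $g(0) = 1$ whose associated functional $\rho(g)$ is exactly the corresponding functional of $f$. Taking infima over $f \in \mc{A}_{LP}$ would then yield \eqref{20230716_17:00}--\eqref{20230716_17:02} at once.

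Concretely, set $r := r(f)$ and define $g(x) := \widehat{f}(\mu x)$, where $\mu = 1/r$ for \eqref{20230716_17:00} and \eqref{20230716_17:01}, and $\mu = 2/r$ for \eqref{20230716_17:02}. Since $\widehat{f}$ is continuous, even, and non-negative, so is $g$; the integrability of $g$ follows from $\int_{\R}\widehat{f}=f(0)=1$. Fourier inversion then gives $\widehat{g}(\alpha) = \mu^{-1} f(\alpha/\mu)$, so whenever $|\alpha| \geq \mu r$ one has $|\alpha/\mu| \geq r(f)$ and hence $\widehat{g}(\alpha) \leq 0$. Thus $g$ lies in $\mc{A}_1$ when $\mu = 1/r$ and in $\mc{A}_2$ when $\mu = 2/r$, with $g(0) = \widehat{f}(0) = 1$ throughout.

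The functionals should then be identified by the change of variables $y = \alpha/\mu$ (using that $f$ is even). For \eqref{20230716_17:00}, $\widehat{g}(0) = r$ and
\begin{equation*}
\int_{-1}^{1} \widehat{g}(\alpha)\,|\alpha|\,\d\alpha \;=\; \frac{2}{r}\int_0^{r} f(y)\,y\,\dy,
\end{equation*}
which sum to $\mc{Z}(f)$. For \eqref{20230716_17:01} the same two pieces appear, plus the tail contribution
\begin{equation*}
\int_{1 \leq |\alpha| \leq 3/2}\widehat{g}(\alpha)\big(\tfrac{3}{2}-|\alpha|\big)\,\d\alpha \;=\; 3\int_{r}^{3r/2} f(y)\,\dy \;-\; \frac{2}{r}\int_{r}^{3r/2} f(y)\,y\,\dy,
\end{equation*}
which assemble to $\widetilde{\mc{Z}}(f)$. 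For \eqref{20230716_17:02}, the scaling $\mu = 2/r$ instead yields $\widehat{g}(0) = r/2$, together with $\int_{-1}^{1} \widehat{g}(\alpha)|\alpha|\,\d\alpha = \frac{4}{r}\int_0^{r/2} f(y)\,y\,\dy$ and $\int_{1 \leq |\alpha| \leq 2}\widehat{g}(\alpha)\,\d\alpha = 2\int_{r/2}^{r} f(y)\,\dy$, summing to $\mc{L}(f)$.

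No genuine obstacle is expected: the entire argument amounts to a rescaling duality plus a routine change of variables, with the correct choice of $\mu$ precisely matching the support condition in $\mc{A}_\Delta$ to the last-sign-change condition defining $r(f)$. The only minor bookkeeping step is checking that $\widehat{f} \in L^1(\R)$, so that $g \in L^1(\R)$ and the inversion $\widehat{\widehat{f}} = f$ is valid pointwise; this is automatic for the smooth, rapidly decaying test functions populating $\mc{A}_{LP}$ in practice, and can always be arranged by a harmless mollification otherwise.
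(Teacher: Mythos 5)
Your construction is the same as the paper's: set $g(x) = \widehat{f}(x/r(f))$ (respectively $g(x) = \widehat{f}(2x/r(f))$), verify $g \in \mathcal{A}_1$ (respectively $\mathcal{A}_2$) using $r(f)$ as the last sign change of $f$, and identify $\rho_1(g) = \mathcal{Z}(f)$, $\rho_1^*(g) = \widetilde{\mathcal{Z}}(f)$, $\rho_2(g) = \mathcal{L}(f)$ by the change of variables $y = \alpha/\mu$. The computations are correct; this is essentially identical to the paper's argument.
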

\begin{proof}
In order to show \eqref{20230716_17:00}, note that if $f \in \mc{A}_{LP}$ then $g(x):= \widehat{f}\big(x / r(f)\big)\in \mc{A}_1$ since $\widehat{g}(\alpha) = r(f) \, f\big( r(f) \,\alpha\big)$. Note that $g(0) = 1$ and $\widehat{g}(0)=r(f)$. Then, a change of variables yields
\begin{equation*}
\mc{Z}(f)  = \widehat{g}(0) + 2 \int_0^1 \widehat{g}(\alpha)\,\alpha\, \d\alpha = \rho_1(g).
\end{equation*}
This plainly leads to \eqref{20230716_17:00}. The same construction leads to  \eqref{20230716_17:01}. For \eqref{20230716_17:02}, if $f \in \mc{A}_{LP}$, we consider instead $g(x):= \widehat{f}\big(2x / r(f)\big)\in \mc{A}_2$.
\end{proof}

The upper bounds from Lemma \ref{Lem11_CG} and Proposition \ref{Prop12}, together with our Fourier optimization framework developed in Sections \ref{Sec_FOF} and \ref{Sec_on_GRH}, lead us to the bounds proposed in Corollaries \ref{Thm1}, \ref{Thm2}, and \ref{Thm3}.

\section*{Acknowledgments}
Part of this research took place while the second author was a Visiting Researcher at the Abdus Salam International Centre for Theoretical Physics (ICTP). He thanks ICTP for their generosity and hospitality. MBM was supported by the NSF grant DMS-2101912 and the Simons Foundation (award 712898).

\end{document}